\documentclass[11pt,english]{smfart}

\usepackage[T2A,T1]{fontenc}
\usepackage[english,francais]{babel}
\usepackage[utf8]{inputenc}

\usepackage{latexsym,amscd,color}
\usepackage{amsmath,amsfonts,amssymb,mathrsfs}
\usepackage{bm}
\usepackage{enumerate,euscript}
\usepackage{amssymb,url,xspace,smfthm,hyperref,CJK}
\usepackage{comment}

\input xy
\xyoption{all}

\numberwithin{equation}{section}

\newcommand{\AMS}{$\mathcal{A}$\mathrm{Ker}n-.1667em\lower.5ex\hbox
        {$\mathcal{M}$}\mathrm{Ker}n-.125em$\mathcal{S}$}

\DeclareMathOperator{\ord}{ord}
\DeclareMathOperator{\mult}{mult}

\DeclareMathOperator{\Spec}{Spec}

\DeclareMathOperator{\vol}{vol}
\def\sbullet{{\scriptscriptstyle\bullet}}

\def\indic{1\hspace{-2.5pt}\mathrm{l}}

\def\esssup_#1{\underset{#1}{\mathrm{ess\,sup\, }}}
\newcommand{\ndot}{\raisebox{.4ex}{.}}

\def\ifquery{\if00}
\ifquery
\def\query#1{\setlength\marginparwidth{65pt} 
\marginpar{\raggedright\fontsize{7.81}{9} 
\selectfont\upshape\hrule\smallskip 
#1\par\smallskip\hrule}} 

\else
\def\query#1{}
\fi

\definecolor{Huayi}{rgb}{0.0, 0.0, 1.0}

\definecolor{Hide}{rgb}{1.0, 0.0, 0.0}


\tolerance 400
\pretolerance 200

\title{On subfiniteness of graded linear series}
\date{\today}
\author{Huayi Chen}
\address{Univ Paris Diderot, Institut de Math\'ematiques de Jussieu - Paris Rive Gauche, CNRS, Sorbonne Universit\'e  B\^atiment Sophie Campus des Grands Moulins, bâtiment Sophie Germain, case 7012, 75205 Paris cedex 13, France.}
\email{huayi.chen@imj-prg.fr}
\urladdr{webusers.imj-prg.fr/~huayi.chen}
\author{Hideaki Ikoma}
\address{Department of Mathematics, Kyoto University, Kyoto, 606-8502, Japan}
\email{ikoma@math.kyoto-u.ac.jp}
\thanks{Huayi Chen is partially supported by ANR-14-CE25-0015, Hideaki Ikoma is supported by JSPS Grant-in-Aid for Young Scientists (B) No.\ 16K17559 and partially by JSPS Grant-in-Aid (S) No.\ 16H06335.}

\begin{document}
\def\smfbyname{}

\begin{abstract}Hilbert's 14th problem studies the finite generation property of the intersection of an integral algebra of finite type with a subfield of the field of fractions of the algebra. It has a negative answer due to the counterexample of Nagata.
We show that a subfinite  version of Hilbert's 14th problem has a confirmative answer. We then establish a graded analogue of this result, which permits to show that the subfiniteness of graded linear series does not depend on the function field in which we consider it. Finally, we apply the subfiniteness result to the study of geometric and arithmetic graded linear series.
\end{abstract}


\maketitle

\tableofcontents

\section{Introduction}

Let $k$ be a field and $X$ be an integral projective scheme over $\Spec k$. If $D$ is a Cartier divisor on $X$, as a \emph{graded linear series} of $D$, one refers to a graded sub-$k$-algebra of $\bigoplus_{n\in\mathbb N}H^0(X,nD)$. The graded linear series are closely related to the positivity of the divisor and are objects of central interest in the study of the geometry of the underlying polarised scheme $(X,D)$. Classically the asymptotic behaviour of graded linear series of finite type is well understood through the theory of Hilbert polynomials. Several results in birational algebraic geometry, such as Fujita's approximation theorem \cite{Fujita94,Takagi07}, show that certain graded linear series, even though not of finite type, still have a similar asymptotic behaviour as in the finite generation case. More recently, Lazarsfeld-Musta\c{t}\u{a} \cite{Lazarsfeld_Mustata09} and Kaveh-Khovanskii \cite{Kaveh_Khovanskii12,Kaveh_Khovanskii12b} have proposed, after ideas of Okounkov \cite{Okounkov96,Okounkov03}, a method to encode the asymptotic behaviour of dimensions of the homogeneous components of a given graded linear series into a convex body (called the \emph{Newton-Okounkov body}) in an Euclidean space.

Note that a graded linear series of a Cartier divisor is always a graded subalgebra of a graded algebra of finite type. It is then quite  natural to ask if there is a nice birational geometry for algebras of subfinite type (namely subalgebras of an algebra of finite type) over a field. 

From the point of view of birational geometry, it is more convenient to consider graded linear series of a finitely generated field extension $K/k$ without specifying a polarised model of $K$. In this framework, as a \emph{graded linear series} of $K/k$, we refer to a graded sub-$k$-algebra $V_\sbullet$ of the polynomial algebra $K[T]$ such that $V_0=k$ and that $V_n$ is a finite dimensional vector space over $k$ for any $n\in\mathbb N$. In \cite{ChenOk}, a new construction of Newton-Okounkov bodies has been proposed by using ideas from Arakelov geometry, which only depends on a choice of a tower of successive field extensions
$
k=K_0\subset K_1\subset\dots\subset K_d=K
$
such that each extension $K_{i+1}/K_i$ is transcendental and of transcendence degree $1$. The construction is valid for graded linear series of subfinite type (namely contained in a graded linear series of finite type of $K/k$) whose field of rational functions $k(V_\sbullet)$ coincides with $K$ (see Definition \ref{Def:gradedlinearseries}).  One may expect that the same method applies to general graded linear series of subfinite type $V_\sbullet$ by considering $V_\sbullet$ as a graded linear series of $k(V_\sbullet)/k$. However, the main obstruction to this strategy is that \emph{a priori} the condition of subfiniteness depends on  the extension $K/k$ with respect to which we consider the graded linear series. This leads to the following subfiniteness problem: given a graded linear series $V_\sbullet$ of $K/k$ of subfinite type, does there exist a graded linear series $W_\sbullet$ of finite type of the extension $k(V_\sbullet)/k$ which contains $V_\sbullet$?

Note that the above problem is closely related to Hilbert's fourteenth problem\footnote{Let $k$ be a field and $k(x_1,\ldots,x_n)$ be the field of rational functions of $n$ variables.
Hilbert's fourteenth problem asked whether the intersection of a subfield of $k(x_1,\ldots,x_n)$ and the polynomial algebra $k[x_1,\ldots,x_n]$ is finitely generated over $k$ (as a $k$-algebra).}. In fact, given a graded linear series $V_\sbullet$ of $K/k$ which is contained in a graded linear series of finite type $V_\sbullet'$. The intersection of $V_\sbullet'$ with $k(V_\sbullet)[T]$ gives a graded linear series of $k(V_\sbullet)/k$ containing $V_\sbullet$, where $k(V_\sbullet)$ is the field of rational functions of $V_\sbullet$. Unfortunately the intersection is not necessarily a $k$-algebra of finite type, as is shown by Nagata's counterexamples \cite{Nagata60,Nagata59} to Hilbert's fourteenth problem.

Note that the above subfiniteness problem actually asks for a weaker condition than the finite generation of the intersection of $V_\sbullet'$ with $k(V_\sbullet)[T]$. It suffices that the intersection is contained in a graded linear series of finite type of $k(V_\sbullet)$. Similarly, we can consider the following subfinite version of Hilbert's fourteenth problem, which {actually} has a positive answer (see Theorem \ref{Thm:Hilbert14subfinite} and Corollary \ref{Cor:subfinite Hilbert 14} \emph{infra}).

\begin{theo}\label{Thm: subfinite Hilbert 14}
Let $k$ be a field, $R$ be an integral $k$-algebra of finite type and $K$ be the field of fractions of $R$. Let $K'$ be an extension of $k$ which is contained in $K$. Then there exists a finitely generated sub-$k$-algebra $R'$ of $K'$ containing $R\cap K'$, such that $\mathrm{Frac}(R')=\mathrm{Frac}(R\cap K')$. 
\end{theo}

The method of proof consists of an induction argument with respect to the field extension $K/k$ which permits to reduce the problem to the case where the extension $K/k$ is monogenerated. Similar method can be applied to the graded case (but with more subtleties because of the grading structure), which leads to the following result and gives a confirmative answer to the subfiniteness problem of graded linear series. It shows that the subfiniteness of graded linear series is an absolute condition, which does not depend on the choice of field extension with respect to which the graded linear series is considered (see Theorem \ref{Thm: graded case} and Corollary \ref{thm:chens:question} \emph{infra}).

\begin{theo}\label{Thm:subfinite graded}Let $k$ be a field and $K/k$ be a finitely generated field extension. 
Let $V_\sbullet$ be a graded linear series of $K/k$ which is of subfinite type. Then there exists a graded linear series of finite type $W_\sbullet$ of $K/k$ such that $V_\sbullet\subset W_\sbullet$ and $k(V_\sbullet)=k(W_\sbullet)$.
\end{theo}

Recall that Hilbert's fourteenth problem is reformulated in a geometric setting by Zariski \cite{Zariski54}, see also \cite{Nagata57} and the survey article \cite{Mumford_Hilbert}. Note that Theorem \ref{Thm: subfinite Hilbert 14} can be compared with the following result in \cite{Zariski54}.

\begin{theo}[Zariski]
Let $k$ be a field, $A$ an integrally closed $k$-algebra of finite type,  $K:=\mathrm{Frac}(A)$, and  $K'/k$ a subextension of $K/k$.
There then exist an integrally closed $k$-algebra $B$ of finite type and an ideal $I$ of $B$ such that the fraction field of $B$ is $k$-isomorphic to the fraction field of $A\cap K'$ and that
\[
A\cap K'=\bigcup_{n\in\mathbb N}(B:I^n),
\]
where ${(B:I^n):=\{x\in\mathrm{Frac}(B)\,:\,xI^n\subset B \}}$ denotes the ideal quotient.
\end{theo}

Inspired by this result, we establish the following projective version of Zariski's theorem and deduce an alternative proof for Theorem \ref{Thm:subfinite graded} (see Corollary \ref{thm:chens:question} \emph{infra}).

\begin{theo}\label{Thm:Zariski projective}
Let $K/K'/k$ be field extensions of finite type and $W_{\sbullet}$ a graded linear series of $K/k$ that is generated over $k$ by the homogeneous elements of degree $1$.
We assume that $W_1$ contains $1\in K$ and that the projective spectrum $P:=\mathrm{Proj}(W_{\sbullet})$ is a normal scheme.
Let $X$ be any integral normal projective $k$-scheme whose field of rational functions is $k$-isomorphic to $k(W_{\sbullet}\cap K'[T])$.
There then exists a $\mathbb Q$-Weil divisor $D$ on $X$ such that
\[
W_n\cap K'\subset H^0(X,nD)\subset k(W_{\sbullet}\cap K'[T])
\]
for every sufficiently positive $n$.
\end{theo}
 
As an application of the above subfiniteness results, we establish a Fujita approximation theorem for general graded linear series of subfinite type (see Theorem \ref{Theo:Application Fujita approximation} \emph{infra}) and an upper bound for the Hilbert-Samuel function of such graded linear series (see Theorem \ref{Thm: effective upper bound} \emph{infra}). More precisely, we obtain the following results.

\begin{theo}
Let $K/k$ be a finitely generated field extension. For any graded linear series $V_\sbullet$ of $K/k$ of subfinite type, whose Kodaira-Iitaka dimension $d$ is nonnegative, the limit
\[\mathrm{vol}(V_\sbullet)=\lim_{n\in\mathbb N, \,V_n\neq\{0\},\,n\rightarrow+\infty}\frac{\mathrm{dim}_k(V_n)}{n^d/d!}\]
exists in $(0,+\infty)$. Moreover, $\vol(V_\sbullet)$ is equal to the supremum of $\mathrm{vol}(W_\sbullet)$, where $W_\sbullet$ runs over the set of all graded linear series of finite type contained in $V_\sbullet$ having $d$ as the Kodaira-Iitaka dimension. Finally, there exists a function $f:\mathbb N\rightarrow\mathbb R_+$ such that 
\[f(n)=\mathrm{vol}(V_\sbullet)\frac{n^d}{d!}+O(n^{d-1})\]
and that $\mathrm{dim}_k(V_n)\leqslant f(n)$ for any $n\in\mathbb N$.
\end{theo}

We also apply the above results to the study of graded linear series in the arithmetic setting (see Theorem \ref{Thm:convergence thm} \emph{infra}).

The article is organised as follows. In the second section, we prove a weaker form of Hilbert's 14th problem as the subfiniteness result stated in Theorem \ref{Thm: subfinite Hilbert 14}. In the third section, we prove a graded analogue of Theorem \ref{Thm: subfinite Hilbert 14}  in the setting of graded linear series. In the fourth section we consider the subfiniteness problem in the geometric setting as a projective analogue of Zariski's result and establish Theorem \ref{Thm:Zariski projective}. Finally in the fifth section, we develop various applications.


\subsection*{Notation and conventions}
\begin{enumerate}[{\bf 1.}]
\item The field of fractions of an integral domain $A$ is denoted by $\mathrm{Frac}(A)$.
\item Let $K/k$ be an extension of fields. We denote by $\mathrm{tr.deg}_k(K)$ the transcendence degree of $K$ over $k$.
\item Let $S$ be a scheme. For any $i\in\mathbb N$, 
we denote by $S^{(i)}$ the set of points $x$ of $S$ such that the local ring $\mathcal O_{S,x}$ has $i$ as its Krull dimension.
If $S$ is an integral scheme, we denote by $\mathrm{Rat}(S)$ the field of rational functions on $S$.
\item Let $k$ be a field and $X$ be a projective normal scheme over $\Spec k$. As a \emph{Weil divisor} (resp. \emph{$\mathbb Q$-Weil divisor}) on $X$, one refers to an element \[D=\sum_{V\in X^{(1)}}n_VV\] in $\mathbb Z^{\oplus X^{(1)}}$ (resp. $\mathbb Q^{\oplus X^{(1)}}$). 
The coefficient $n_V$ is referred to as the \emph{multiplicity of $D$ along $V$}, and is denoted by $\mult_V(D)$.
If all coefficients $n_V$ are nonnegative, we say that $D$ is \emph{effective}, denoted by $D\geqslant 0$. If $\phi$ is a nonzero rational function on $X$, we denote by $(\phi)$ the principal Weil divisor associated with $\phi$, namely
\[(\phi):=\sum_{V\in X^{(1)}}\mathrm{ord}_V(\phi) V.\]
The map $(\ndot):\mathrm{Rat}(X)^{\times}\rightarrow\mathbb Z^{\oplus X^{(1)}}$ is a group homomorphism and induces a $\mathbb Q$-linear map from $\mathrm{Rat}(X)^{\times}\otimes_{\mathbb Z}\mathbb Q$ to $\mathbb Q^{\oplus X^{(1)}}$ which we denote by $(\ndot)_{\mathbb Q}$. If $D$ is a $\mathbb Q$-Weil divisor on $S$, we define
\begin{equation}
H^0(X,D):=\left\{\phi\in\mathrm{Rat(S)}^{\times}\,:\,D+(\phi\otimes 1)_{\mathbb Q}\geqslant 0\right\}\cup\{0\}
\end{equation}
and
\begin{equation}
R(D)_{\sbullet}:=\bigoplus_{n\geqslant 0}H^0(X,nD)T^n.
\end{equation}
Note that $R(D)_{\sbullet}$ is a graded sub-$k$-algebra of the polynomial algebra $\mathrm{Rat}(X)[T]$.
\item\label{NC:valuations}
Let $K$ be a field. As \emph{discrete valuation} of $K$ we refer to a valuation $\nu:K\rightarrow\mathbb Q\cup\{+\infty\}$ such that $\nu(K^{\times})$ is a monogenerated subgroup of $(\mathbb Q,+)$. Given such a valuation $\nu$, we denote by $O_\nu:=\{f\in K\,:\,\nu(f)\geqslant 0\}$ its valuation ring, $\mathfrak m_\nu$ the maximal ideal of $O_\nu$ and $\kappa(\nu):=O_\nu/\mathfrak m_\nu$ the residue field. If $O_\nu$ {is equal to} $K$, we say that the valuation $\nu$ is \emph{trivial} (note that in this case $\nu(a)=0$ for any $a\in K^{\times}$).

If $K/k$ is a field extension, we call \emph{discrete valuation of $K$ over $k$} any discrete valuation $\nu$ of $K$ such that $\nu(a)=0$ for any $a\in k^{\times}$. In this case $\kappa(\nu)$ is an extension of $k$ and $O_\nu$ is a {$k$}-algebra. Two discrete valuations $\nu_1$ and $\nu_2$ of $K$ over $k$ are said to be \emph{equivalent} if there exists an order-preserving isomorphism $\iota:\nu_1(K^{\times})\to\nu_2(K^{\times})$ such that $\nu_2=\iota\circ\nu_1$. 

Let $K'/k$ be a subextension of $K/k$ and let $\nu$ be a discrete valuation of $K$ over $k$ which is nontrivial.
Then the restriction of $\nu$ to $K'$
is a discrete valuation of $K'$ over $k$.
We define the \emph{ramification index} of $\nu$ with respect to $K'$ as the {unique} integer $e({K',}\nu)\in\mathbb N$ satisfying
\begin{equation}\label{eqn:ramification:index}
\nu(K^{\prime\times})=e(K',\nu)\nu(K^{\times}).
\end{equation}
Note that $e(K',\nu)=0$ if and only if $\nu|_{K'}$ is trivial.

\item\label{NC:centre}
Let $k$ be a field and $S$ be an integral separated $k$-scheme.
Given a discrete valuation $\nu$ of $\mathrm{Rat}(S)$ over $k$, we say that a point $x$ of $S$ is the \emph{centre} of $\nu$ in $S$ if 
\begin{equation}
\mathcal O_{S,x}\subset O_{\nu}\quad\text{and}\quad\mathfrak{m}_{x}=\mathfrak{m}_{\nu}\cap\mathcal O_{S,x},
\end{equation}
where $\mathfrak m_{x}$ denotes the maximal ideal of $\mathcal O_{S,x}$. By the valuative criterion of separation, if the centre of $\nu$ in $S$ exists, then it is unique. In the case where the centre of $\nu$ in $S$ exists, we denote it by $c_S(\nu)$.
If $S$ is proper over $k$, then by the valuative criterion of properness every discrete valuation of $\mathrm{Rat}(S)$ over $k$ has a centre in $S$.

A discrete valuation $\nu$ is trivial if and only if the centre of $\nu$ in $S$ is the generic point. Moreover, each {regular} point $\xi\in S^{(1)}\cup S^{(0)}$ defines a discrete valuation
$\ord_{\xi}:\mathrm{Rat}(S)\to\mathbb Z\cup\{+\infty\}$ whose centre is $\xi$.

\item\label{graded coherent sheaf} Let $R_\sbullet=\bigoplus_{n\in\mathbb N}R_n$ be a graded ring. We denote by $\mathrm{Proj}(R_\sbullet)$ the projective spectrum of $R_\sbullet$. If $M_\sbullet$ is a graded $R_\sbullet$-module, we denote by $\widetilde{M_\sbullet}$ the quasi-coherent $\mathcal O_{\mathrm{Proj}(R_\sbullet)}$-module associated with $M_\sbullet$ (see \cite[\S II.2.5]{EGAII}). For any $m\in\mathbb N$, we let $M(m)_{\sbullet}$ be the $\mathbb N$-graded $R_\sbullet$-module such that $M(m)_n=M_{n+m}$ for any $n\in\mathbb N$, and let $M_{\geqslant m}$ be the $\mathbb N$-graded sub-$R_\sbullet$-module of $M_{\sbullet}$ such that $(M_{\geqslant m})_n=\{0\}$ if $n<m$ and $(M_{\geqslant m})_n=M_n$ if $n\geqslant m$. In particular, one has $M(m)_{\sbullet}=M_{\geqslant m}(m)_{\sbullet}$.
The quasi-coherent sheaf $\widetilde{R(m)_{\sbullet}}$ is denoted by $\mathcal O_{\mathrm{Proj}(R_\sbullet)}(m)$. Note that if $R_\sbullet$ is generated as an $R_0$-algebra by $R_1$, then $\mathcal O_{\mathrm{Proj}(R_\sbullet)}(m)$ are invertible $\mathcal O_{\mathrm{Proj}(R_\sbullet)}$-modules for all $m\in\mathbb N$, and one has canonical isomorphisms
\[\mathcal O_{\mathrm{Proj}(R_\sbullet)}(m)\otimes_{\mathcal O_{\mathrm{Proj}(R_\sbullet)}}\mathcal O_{\mathrm{Proj}(R_\sbullet)}(m')\cong \mathcal O_{\mathrm{Proj}(R_\sbullet)}(m+m')\]
for all $(m,m')\in\mathbb N^2$.
\item\label{essentially integral} {Let $R_\sbullet=\bigoplus_{n\in\mathbb N}R_n$ be a graded ring. We say that $R_\sbullet$ is \emph{essentially integral} if the ideal $R_{\geqslant 1}$ of $R_\sbullet$ does not vanish and if the product of two nonzero homogeneous elements of positive degree of $R_\sbullet$ is nonzero. Note that if $R_\sbullet$ is essentially integral then the scheme $\mathrm{Proj}(R_\sbullet)$ is integral (see \cite[Proposition~II.2.4.4]{EGAII})}
\end{enumerate}

\subsection*{Acknowledgement} Huayi Chen would like to thank Beijing International Center for Mathematical Research for the support of visiting position and for the hospitality. We are grateful to Kiumars Kaveh for communications and comments.
 
\section{A weak form of Hilbert's fourteenth problem}

Let $k$ be a field, $R$ be a finitely generated integral $k$-algebra and $K$ be the field of fractions of $R$. Clearly $K$ is a finitely generated extension of $k$.
Let $K'$ be a subextension of $K/k$, which is also a finitely generated extension (see \cite[Chapitre~V, \S14, $\text{n}^{\circ}$7, Corollaire~3]{Bourbaki_Alg}). We consider the intersection $R\cap K'$ and ask the following question which could be considered as a weaker form of Hilbert's fourteenth problem: \emph{does there exist a finitely generated sub-$k$-algebra $R'$ of $K'$ containing $R\cap K'$ such that $\mathrm{Frac}(R')=\mathrm{Frac}(R\cap K')$}. In this section, we give a confirmative answer to this question.

\begin{defi}
Let $k$ be a field and $A$ be a $k$-algebra. We say that $A$ is of \emph{subfinite type} if it is a sub-$k$-algebra of a $k$-algebra of finite type. 
\end{defi}

\begin{lemm}\label{Lem:injective yields dominant}
An injective homomorphism of rings $A\to B$ yields a dominant morphism $\Spec(B)\to\Spec(A)$.
\end{lemm}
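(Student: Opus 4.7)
The plan is to prove density of the image of the induced morphism $f := \Spec(\varphi) : \Spec(B) \to \Spec(A)$ directly, by checking that every non-empty basic open of $\Spec(A)$ meets the image. The basic opens $D(a) = \{\mathfrak p \in \Spec(A) : a \notin \mathfrak p\}$ for $a \in A$ form a basis of the Zariski topology, and $D(a)$ is non-empty precisely when $a$ is not nilpotent in $A$ (using the standard identification of $\sqrt{0}$ with the intersection of all prime ideals of $A$).

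Given such a non-nilpotent $a \in A$, the next step is to observe that $\varphi(a) \in B$ is also non-nilpotent: if $\varphi(a)^n = 0$, then $\varphi(a^n) = 0$, and injectivity of $\varphi$ would force $a^n = 0$, contradicting the choice of $a$. Hence there exists a prime $\mathfrak q \in \Spec(B)$ with $\varphi(a) \notin \mathfrak q$ (apply the standard fact that the complement of $\sqrt{0_B}$ is the union of the complements of all primes, or equivalently localize $B$ at the powers of $\varphi(a)$ and pick a maximal ideal of the resulting non-zero ring). Its contraction $\varphi^{-1}(\mathfrak q)$ then lies in $D(a) \cap f(\Spec(B))$, showing that every non-empty basic open meets the image.

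Alternatively, one may invoke the general identity $\overline{f(\Spec(B))} = V(\ker\varphi)$, valid for any ring homomorphism, and combine it with $\ker\varphi = 0$ to conclude that the closure of the image equals $V(0) = \Spec(A)$. I do not anticipate any genuine obstacle here; the lemma is essentially a direct translation of the injectivity hypothesis to the spectral side, and both routes above use only elementary facts about prime ideals and the Zariski topology.
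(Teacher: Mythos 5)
Your proof is correct. Both of your routes work: the nilradical argument is sound (non-nilpotence of $a$ passes to $\varphi(a)$ by injectivity, a prime of $B$ avoiding $\varphi(a)$ exists because $B_{\varphi(a)}\neq 0$, and its contraction lies in $D(a)$), and the identity $\overline{f(\Spec(B))}=V(\ker\varphi)$ packages the same computation. The paper takes a slightly different, though equally elementary, route: it fixes a \emph{minimal} prime $\mathfrak p$ of $A$, localises the injection $A\to B$ at $S=A\setminus\mathfrak p$ to get a nonzero ring $S^{-1}B$, extracts a prime $\mathfrak P$ of $B$ with $\mathfrak P\cap A\subset\mathfrak p$, and uses minimality to conclude $\mathfrak P\cap A=\mathfrak p$. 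The difference in payoff is mild but real: the paper's argument shows that every minimal prime of $A$ is literally \emph{in} the image (not merely in its closure), which is a marginally stronger statement and is the form in which the lemma is applied later (in Proposition \ref{Pro:contained in domain} one needs a prime $\mathfrak p$ of $B$ with $\mathfrak p\cap A=\{0\}$, i.e.\ a prime contracting exactly to the zero ideal of the domain $A$). Your density argument alone would not immediately hand you such a $\mathfrak p$; you would recover it by running your localisation step at $S=A\setminus\{0\}$, which is essentially the paper's proof specialised to the integral case. So your proof establishes the lemma as stated, but the paper's phrasing is tuned to the downstream application.
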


\begin{proof}
Let $\mathfrak{p}$ be a minimal prime ideal of $A$ and $S:=A\setminus\mathfrak{p}$. Since the homomorphism of rings $A\rightarrow B$ is injective, also is the localised homomorphism $A_{\mathfrak{p}}\to S^{-1}B$. Hence $S^{-1}B$ is nonzero. In particular, there exists a prime ideal $\mathfrak{P}$ of $B$ such that $\mathfrak{P}\cap S=\emptyset$, or equivalently, $\mathfrak P\cap A\subset\mathfrak p$.
Since $\mathfrak P\cap A$ is a prime ideal of $A$ and $\mathfrak{p}$ is a  minimal prime ideal of $A$, one has $\mathfrak P\cap A=\mathfrak p$.
\end{proof}

\begin{prop}\label{Pro:contained in domain}
Let $k$ be a field and $A$ be a $k$-algebra of subfinite type. We assume that $A$ is an integral domain. Then there exists a $k$-algebra of finite type containing $A$, which is also an integral domain.
\end{prop}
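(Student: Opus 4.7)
The plan is to exploit the preceding lemma to find a prime ideal of the ambient finite-type algebra that restricts to $(0)$ on $A$, and then pass to the corresponding integral quotient. By hypothesis there is an injective $k$-algebra homomorphism $\iota:A\hookrightarrow B$ with $B$ a $k$-algebra of finite type. Since $A$ is an integral domain, the zero ideal $(0)$ is a (the unique) minimal prime of $A$, so I am in position to apply Lemma~\ref{Lem:injective yields dominant}.

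Running the argument of that lemma with $\mathfrak{p}=(0)$ and $S:=A\setminus\{0\}$, I get a prime ideal $\mathfrak{P}$ of $B$ with $\mathfrak{P}\cap S=\emptyset$, equivalently $\iota^{-1}(\mathfrak{P})=\mathfrak{P}\cap A=(0)$. Consequently the composition $A\xrightarrow{\iota}B\twoheadrightarrow B/\mathfrak{P}$ has trivial kernel and thus realises $A$ as a sub-$k$-algebra of $B/\mathfrak{P}$.

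The quotient $B':=B/\mathfrak{P}$ is an integral domain (since $\mathfrak{P}$ is prime) and is a $k$-algebra of finite type (as a quotient of the finitely generated $k$-algebra $B$). This $B'$ is the algebra required by the proposition. The argument is entirely formal once the lemma is in hand, so I do not anticipate a serious obstacle; the only subtle point is observing that the lemma in fact produces a prime with the required specific contraction, and this follows directly from inspection of its proof applied to the minimal prime $(0)$ of the domain $A$.
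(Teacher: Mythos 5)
Your proof is correct and is essentially identical to the paper's: both apply Lemma~\ref{Lem:injective yields dominant} to the inclusion $A\subset B$ with $\mathfrak p=(0)$ (the unique minimal prime of the domain $A$) to produce a prime $\mathfrak P$ of $B$ contracting to $(0)$, and then embed $A$ into the finite-type domain $B/\mathfrak P$. No issues.
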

\begin{proof}
Let $B$ be a $k$-algebra of finite type such that $A\subset B$.
By Lemma \ref{Lem:injective yields dominant}, one can find a prime ideal $\mathfrak p$ of $B$ such that $\mathfrak p\cap A=\{0\}$. Therefore we can consider $A$ as a sub-$k$-algebra of $B/\mathfrak p$.
Since $B$ is a $k$-algebra of finite type, also is $B/\mathfrak p$. The proposition is thus proved.
\end{proof}

\begin{lemm}\label{Lem: finite extension case}
Let $A$ be a $k$-algebra which is an integral domain, and $K$ the field of fractions of $A$. Let $K'/K$ be a finite extension of $K$ generated by one element $\alpha$ and $B'$ a sub-$k$-algebra of finite type of $K'$ which contains $A$. Then there exists a sub-$k$-algebra of finite type $B$ of $K$ which contains $A$.
\end{lemm}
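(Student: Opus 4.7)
The plan is to build $B$ from the coefficients that appear when one expands the generators of $B'$ in a $K$-basis of $K'$ attached to $\alpha$, together with the coefficients of the minimal polynomial of $\alpha$. The point is that after this enlargement, $B[\alpha]$ becomes a free $B$-module whose $B$-basis is also a $K$-basis of $K'$, so that intersecting $B[\alpha]$ with $K$ inside $K'$ returns $B$.

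Concretely, I would fix generators $B'=k[b_1,\dots,b_n]$ and set $d:=[K':K]$. Since $K'=K[\alpha]$ is finite of degree $d$ over $K$, the family $(1,\alpha,\dots,\alpha^{d-1})$ is a $K$-basis of $K'$, so each $b_i$ admits a unique expansion
\[
b_i=\sum_{j=0}^{d-1}c_{ij}\alpha^j,\qquad c_{ij}\in K.
\]
Let $m(T)=T^d+m_{d-1}T^{d-1}+\cdots+m_0\in K[T]$ be the minimal polynomial of $\alpha$ over $K$, and set
\[
B:=k[\,c_{ij},\,m_l\ :\ 1\le i\le n,\ 0\le j,l\le d-1\,]\subset K,
\]
which is a finitely generated $k$-subalgebra of $K$.

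The crucial step is to show that $B[\alpha]\cap K=B$ inside $K'$. Since $m$ is monic with coefficients in $B$, Euclidean division in $B[T]$ by $m$ shows that the kernel of the evaluation map $B[T]\to K'$, $T\mapsto\alpha$, is exactly the ideal $(m)$: any $f\in B[T]$ with $f(\alpha)=0$ can be written $f=qm+r$ with $\deg r<d$, and the nonvanishing of $r(\alpha)$ for $r\neq 0$ would contradict $[K':K]=d$. Hence $B[\alpha]\cong B[T]/(m)$ is free as a $B$-module with basis $(1,\alpha,\dots,\alpha^{d-1})$, which also serves as a $K$-basis of $K'$. Therefore the inclusion $B[\alpha]=\bigoplus_{j=0}^{d-1}B\alpha^j\hookrightarrow\bigoplus_{j=0}^{d-1}K\alpha^j=K'$ is compatible with the two direct-sum decompositions, so intersecting with $K=K\cdot 1$ recovers precisely the $j=0$ component $B\cdot 1=B$.

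To conclude, each $b_i$ lies in $B[\alpha]$ by construction, so $B'=k[b_1,\dots,b_n]\subset B[\alpha]$, and the inclusions $A\subset B'\subset B[\alpha]$ together with $A\subset K$ yield $A\subset B[\alpha]\cap K=B$. The only nontrivial point in the argument is the identification $B[\alpha]\cong B[T]/(m)$, which relies solely on $m$ being monic and on $\alpha$ having no nontrivial $K$-linear relation in degree $<d$; everything else is bookkeeping in the chosen basis.
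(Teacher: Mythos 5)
Your proof is correct and rests on the same mechanism as the paper's: adjoin to $k$ the coefficients of the monic minimal polynomial of $\alpha$ together with the coefficients of expressions for the generators of $B'$ in terms of $\alpha$, then use Euclidean division by that monic polynomial to see that any element of $A\subset K$ already lies in the small ring. You package this as the freeness of $B[\alpha]\cong B[T]/(m)$ over $B$ with basis $(1,\alpha,\dots,\alpha^{d-1})$ and the resulting identity $B[\alpha]\cap K=B$, whereas the paper carries out the division element by element, but the two arguments are essentially identical.
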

\begin{proof}
 Let $f\in K[T]$ be the minimal polynomial of $\alpha$ over $K$, which is assume to be monic. Let $F_1,\ldots,F_n$ be polynomials in $K[T]$ such that $B'=k[F_1(\alpha),\ldots,F_n(\alpha)]$. Let $S\subset K$ be the (finite) set of the coefficients of the polynomials $f$, $F_1,\ldots,F_n$. We claim that $A$ is contained in $k[S]$. In fact, suppose that an element $u$ of $A$ is written in the form $\varphi(F_1(\alpha),\ldots,F_n(\alpha))$, where $\varphi\in k[X_1,\ldots,X_n]$, then by Euclidean division the polynomial $\varphi(F_1,\ldots,F_n)\in k[S][T]$ can be written as $fg+h$, where $g$ and $h$ are polynomials in $k[S][T]$ with $\deg(h)<\deg(f)$. The decomposition $\varphi(F_1,\ldots,F_n)=fg+h$ is also the Euclidean division of $\varphi(F_1,\ldots,F_n)$ by $f$ in the polynomial ring $K[T]$.  By definition, $\varphi(F_1,\ldots,F_n)-u$ is divisible by $f$ in $K[T]$. Therefore, the polynomial $h$ is actually constant and equals $u$, which shows that $u\in k[S]$.
\end{proof}

\begin{lemm}
Let $A$ be a $k$-algebra which is an integral domain, and $K$ the field of fractions of $A$. Let $K'/K$ be a purely transcendental extension of transcendence degree $1$ and $B'$ a sub-$k$-algebra of finite type of $K'$ which contains $A$. Then there exists a sub-$k$-algebra of finite type $B$ of $K$ which contains $A$.
\end{lemm}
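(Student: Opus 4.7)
The plan is to proceed in close parallel with Lemma \ref{Lem: finite extension case}, but now exploiting the fact that $K' = K(t)$ for some $t$ transcendental over $K$. The main idea is to write the generators of $B'$ as rational functions in $t$ with coefficients in $K$, and then to observe that any element of $B'$ which happens to lie in $K$ is, as a rational function in $t$, a constant. Comparing the coefficients of powers of $t$ in a suitable representation will pin down such an element inside a finitely generated sub-$k$-algebra of $K$ built from the data defining $B'$.

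Concretely, I would fix a generator $t$ of $K'$ over $K$ and write $B' = k[F_1, \ldots, F_n]$ with each $F_i = p_i(t)/q_i(t)$ for some $p_i, q_i \in K[t]$ with $q_i \neq 0$. Let $S \subset K$ be the finite set of coefficients of all the polynomials $p_i$ and $q_i$, so that $p_i, q_i \in k[S][t]$. Set $D(t) := q_1(t) \cdots q_n(t) \in k[S][t]$, of degree $d$ in $t$, and let $a \in k[S]$ denote its leading coefficient (nonzero since each $q_i \neq 0$). My candidate is $B := k[S][a^{-1}]$, a finitely generated $k$-subalgebra of $K$.

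To verify $A \subset B$, take a nonzero $u \in A \subset B'$ and write $u = \varphi(F_1, \ldots, F_n)$ for some $\varphi \in k[X_1, \ldots, X_n]$ of total degree at most $N$. Clearing denominators yields an identity $u \cdot D(t)^N = P(t)$ in $K[t]$ with $P \in k[S][t]$. Since $u \in K$ does not depend on $t$, the left-hand side, viewed as a polynomial in $t$, has degree $Nd$ with leading coefficient $u \cdot a^N$. Identifying this with the leading coefficient of $P$, which lies in $k[S]$, gives $u \cdot a^N \in k[S]$, hence $u \in k[S][a^{-1}] = B$.

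The only subtlety is that the exponent $N$ depends on the element $u$ being tested, so one might worry that the denominators $a^N$ proliferate across different $u$. This is harmless: all such denominators are accommodated simultaneously by inverting the single element $a$, so $B = k[S][a^{-1}]$ remains finitely generated over $k$. There is thus no essential obstacle; once the correct normal form for elements of $B'$ is chosen, the argument is just a coefficient comparison in the polynomial ring $K[t]$, made possible by the algebraic independence of $t$ over $K$.
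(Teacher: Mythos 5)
Your proof is correct, and it takes a genuinely different route from the paper's. The paper specialises the transcendental generator $\alpha$ to an algebraic element $\beta$ over $K$ chosen so that no denominator $G_i(\beta)$ vanishes, observes that the identity $u=P(\varphi_1(\alpha),\dots,\varphi_n(\alpha))$ survives the substitution $\alpha\mapsto\beta$ because $\alpha$ is transcendental over $K(\beta)$, and then invokes the preceding lemma on finite monogenerated extensions to descend from $K(\beta)$ to $K$. You instead clear denominators and compare the coefficient of the top power $t^{Nd}$ in the polynomial identity $u\cdot D(t)^N=P(t)$ in $K[t]$, which directly yields $u\,a^N\in k[S]$ and hence the explicit containing algebra $B=k[S][a^{-1}]\subset K$. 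Your argument is self-contained (it does not rely on the finite-extension lemma), stays inside $K$ throughout rather than passing through an auxiliary finite extension $K(\beta)/K$, and produces the containing algebra explicitly; the paper's specialisation argument has the advantage of running in parallel with its treatment of the algebraic case, at the cost of needing the algebraic closure and the extra reduction step. One cosmetic remark: the coefficient comparison should be phrased as equating the coefficients of $t^{Nd}$ on both sides (the right-hand side $P$ could \emph{a priori} have terms of higher degree in $t$, which the identity then forces to vanish), but this does not affect the conclusion $u\,a^N\in k[S]$.
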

\begin{proof}
Let $\alpha\in K'$ be a transcendental element over $K$ such that $K'=K(\alpha)$. Assume that $B'=k[\varphi_1(\alpha),\ldots,\varphi_n(\alpha)]$, where each $\varphi_i$ is a rational function of the form $F_i/G_i$, where $F_i$ and $G_i$ are polynomials of one variable with coefficients in $K$ and $G_i\neq 0$. Let $\beta$ be an element {in the algebraic closure} of the field $K$ such that $G_i(\beta)\neq 0$ in $K'(\beta)$ for any $i\in\{1,\ldots,n\}$. Then one has $A\subset \widetilde B:=k[\varphi_1(\beta),\ldots,\varphi_n(\beta)]\subset K(\beta)$. In fact, if an element $u$ of $A$ can be written as $P(\varphi_1(\alpha),\ldots,\varphi_n(\alpha))$, where $P$ is a polynomial with coefficients in $k$, then, since $\alpha$ is transcendental over $K(\beta)$, by considering $\alpha$ as {the} variable of rational {functions} and by specifying its value by $\beta$, we obtain that $u=P(\varphi_1(\beta),\ldots,\varphi_n(\beta))$. Finally, by applying Lemma \ref{Lem: finite extension case} to $A\subset \widetilde B$ and the finite extension $K(\beta)/K$, we obtain that there exists a $k$-algebra of finite type $B\subset K$ such that $A\subset B$.
\end{proof}

\begin{theo}\label{Thm:Hilbert14subfinite}
Let $k$ be a field and $A$ be a $k$-algebra of subfinite type. We assume in addition that $A$ is an integral domain and we denote by $K$ the field of fractions of $A$. Then there exists a sub-$k$-algebra of finite type $B$ of $K$ such that $A\subset B$. 
\end{theo}
\begin{proof}
By Proposition \ref{Pro:contained in domain}, there exists a $k$-algebra of finite type $B'$ which is an integral domain containing $A$. Let $K'$ be the field of fractions of $B'$, it is a finitely generated extension of $K$. Therefore there exists a sequence of extensions \[K=K_0\subsetneq K_1\subsetneq\ldots\subsetneq K_n=K'\] such that each extension $K_i/K_{i-1}$ is generated by one element, $i\in\{1,\ldots,n\}$. The extension $K_i/K_{i-1}$ is either generated by an algebraic element over $K_{i-1}$ or is purely transcendental of transcendence degree $1$. By induction we obtain that, for any $i\in\{0,\ldots,n-1\}$, there exists a sub-$k$-algebra of finite type $B_i$ of $K_i$ such that $B_i\supset A$. The theorem is thus proved.
\end{proof}

\begin{coro}\label{Cor:subfinite Hilbert 14}
Let $k$ be a field, $R$ be an integral $k$-algebra of finite type and $K$ be the field of fractions of $R$. Let $K'$ be an extension of $k$ which is contained in $K$. Then there exists a finitely generated sub-$k$-algebra $R'$ of $K'$ containing $R\cap K'$, such that $\mathrm{Frac}(R')=\mathrm{Frac}(R\cap K')$. 
\end{coro}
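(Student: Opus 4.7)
The plan is to derive this corollary directly from Theorem \ref{Thm:Hilbert14subfinite}, since all the real work (the induction on the tower of field extensions and the treatment of the transcendental and algebraic one-variable steps) has been carried out there. The task reduces to verifying that the hypotheses of Theorem \ref{Thm:Hilbert14subfinite} apply to $A := R\cap K'$, and then checking that the $k$-algebra of finite type produced by that theorem actually lives inside $K'$.

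First I would set $A := R\cap K'$ and observe two things: (i) $A$ is a sub-$k$-algebra of the $k$-algebra of finite type $R$, so $A$ is of subfinite type in the sense of the definition given just before Lemma \ref{Lem:injective yields dominant}; and (ii) $A$ is an integral domain, because $A\subset K$. Let $L := \mathrm{Frac}(A)$. Since $A\subset K'$ and $K'$ is a field, $L\subset K'$.

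Now I would apply Theorem \ref{Thm:Hilbert14subfinite} to $A$: this produces a sub-$k$-algebra of finite type $R'\subset L = \mathrm{Frac}(A) = \mathrm{Frac}(R\cap K')$ with $A\subset R'$. Because $L\subset K'$, this $R'$ automatically sits inside $K'$, so it is a finitely generated sub-$k$-algebra of $K'$ containing $R\cap K'$. Finally, the sandwich $A\subset R'\subset\mathrm{Frac}(A)$ forces $\mathrm{Frac}(R') = \mathrm{Frac}(A) = \mathrm{Frac}(R\cap K')$, yielding the required equality of fraction fields.

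Since every step is essentially a direct verification, there is no real obstacle once Theorem \ref{Thm:Hilbert14subfinite} is available; the only subtle point to be explicit about is that the $R'$ supplied by Theorem \ref{Thm:Hilbert14subfinite} lies in $\mathrm{Frac}(R\cap K')$ rather than merely in $K$, which is exactly what is needed to conclude both $R'\subset K'$ and the equality of fraction fields in a single stroke.
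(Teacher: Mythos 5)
Your proposal is correct and follows essentially the same route as the paper: both apply Theorem \ref{Thm:Hilbert14subfinite} to $A=R\cap K'$ (an integral $k$-algebra of subfinite type) to obtain a finite-type $R'$ inside $\mathrm{Frac}(R\cap K')\subset K'$, and then read off the equality of fraction fields from the sandwich $A\subset R'\subset\mathrm{Frac}(A)$. Your write-up is just a slightly more explicit version of the paper's own argument.
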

\begin{proof}
By definition, $R\cap K'$ is an integral $k$-algebra of subfinite type. By Theorem \ref{Thm:Hilbert14subfinite}, there exists a sub-$k$-algebra of finite type $R'$ of $\mathrm{Frac}(R\cap K')$ such that $R\cap K'\subset R'$. Clearly one has $\mathrm{Frac}(R')=\mathrm{Frac}(R\cap K')$ since $R\cap K'\subset R'\subset \mathrm{Frac}(R\cap K')$. The assertion is thus proved. 
\end{proof}

\section{Graded linear series and subfiniteness}

Let $k$ be a field and $K/k$ be a finitely generated field extension. Let \[K[T]=\bigoplus_{n\in\mathbb N}KT^n\]
be the graded ring of polynomials of one variable with coefficients in $K$.

\begin{defi}\label{Def:gradedlinearseries}
As a \emph{graded linear series} of $K/k$ we refer to a graded sub-$k$-algebra \[V_\sbullet=\bigoplus_{n\in\mathbb N}V_nT^n\] of $K[T]$ such that $V_0=k$ and that $V_n$ is a finite dimensional $k$-vector subspace of $K$ for any $n\in\mathbb N_{\geqslant 1}$. 

Let $V_\sbullet$ and $V_\sbullet'$ be two graded linear series of $K/k$. If $V_n\subset V_n'$ for any $n\in\mathbb N$, we say that $V_\sbullet$ \emph{is contained in $V_\sbullet'$}, or $V_\sbullet$ \emph{contains} $V_\sbullet'$, and 
denote it by $V_\sbullet\subset V_\sbullet'$.

Let $V_\sbullet$ be a graded linear series of $K/k$. If $V_\sbullet$ is finitely generated as a $k$-algebra, we say that $V_\sbullet$ is \emph{of finite type}. If $V_\sbullet$ is contained in a graded linear series of finite type, we say that it is \emph{of subfinite type}.

Let $V_\sbullet$ be a graded linear series of $K/k$. We denote by $k(V_\sbullet)$ the subextension of $K/k$ generated by elements of the form $f/g$, where $f$ and $g$ are nonzero elements of $K$ such that there exists $n\in\mathbb N_{\geqslant 1}$ with $\{f,g\}\subset V_n$. The field $k(V_\sbullet)$ is called \emph{the field of rational functions of $V_\sbullet$.}
\end{defi}

\begin{lemm}\label{Rem:generation of homogeneous fractions}
Given any graded linear series $V_{\sbullet}$ of $K/k$, one has
\[
k(V_n)=k(V_{\sbullet})
\]
for every sufficiently positive integer $n$ with $V_n\neq\{0\}$, where $k(V_n)$ denotes the subextension of $K/k$ generated by the elements of the form $f/g$ with $\{f,g\}\subset V_n$, $g\neq 0$.
\end{lemm}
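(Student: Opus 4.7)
The plan is to split the equality into the trivial inclusion $k(V_n)\subset k(V_\sbullet)$ (which holds for \emph{every} $n$) and the non-trivial reverse inclusion for sufficiently large $n\in\mathbb N_{\geq 1}$ with $V_n\neq\{0\}$. The commented-out remark in the source handles only $n$ divisible by a fixed integer; to upgrade this to ``every sufficiently large $n$ in the support'', I will need a short numerical-semigroup argument.

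First, I would study the support $S:=\{n\in\mathbb N_{\geq 1}:V_n\neq\{0\}\}$. Since $V_\sbullet\subset K[T]$ with $K$ a field, the product of two nonzero elements of $K$ is nonzero, so for $n,m\in S$, picking nonzero $f\in V_n$ and $h\in V_m$ gives $fh\in V_{n+m}\setminus\{0\}$. Hence $S$ is a sub-semigroup of $(\mathbb N_{\geq 1},+)$. Setting $d:=\gcd(S)$, a classical theorem on numerical semigroups implies that every sufficiently large multiple of $d$ lies in $S$.

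Second, because $K/k$ is finitely generated, so is the subextension $k(V_\sbullet)/k$ (by the Bourbaki reference cited in the introduction). Therefore finitely many of the generators of the prescribed form suffice: there exist $f_1/g_1,\dots,f_r/g_r$ with $\{f_i,g_i\}\subset V_{n_i}$ and $g_i\neq 0$ such that $k(V_\sbullet)=k(f_1/g_1,\dots,f_r/g_r)$. Setting $N_0:=\mathrm{lcm}(n_1,\dots,n_r)$, the identity
\[
\frac{f_i}{g_i}=\frac{f_ig_i^{N_0/n_i-1}}{g_i^{N_0/n_i}},\qquad f_ig_i^{N_0/n_i-1},\;g_i^{N_0/n_i}\in V_{N_0},
\]
puts each generator in $k(V_{N_0})$, so $k(V_{N_0})=k(V_\sbullet)$ (and in particular $N_0\in S$, as $N_0$ is a positive integer multiple of each $n_i\in S$).

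Third, I would record a monotonicity lemma: for any $n,m\in S$ and any nonzero $h\in V_m$, every ratio $f/g$ with $\{f,g\}\subset V_n$, $g\neq 0$ can be rewritten as $(fh)/(gh)$ with $\{fh,gh\}\subset V_{n+m}$, so $k(V_n)\subset k(V_{n+m})$.

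Finally I would combine the three ingredients. Since $d\mid N_0$ and $d\mid n$ for every $n\in S$, once $n\in S$ is large enough the difference $n-N_0$ is a sufficiently large multiple of $d$ and therefore lies in $S$. The monotonicity step then gives $k(V_\sbullet)=k(V_{N_0})\subset k(V_n)$, which together with the trivial inclusion $k(V_n)\subset k(V_\sbullet)$ yields the desired equality. The one point that genuinely requires care is the last step, where the numerical-semigroup fact is used to upgrade ``divisible by $N_0$'' to ``sufficiently large in $S$''; everything else is formal.
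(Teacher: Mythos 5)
Your proof is correct and follows essentially the same route as the paper's: the same monotonicity observation ($k(V_n)\subset k(V_{n+m})$ via multiplication by a nonzero $h\in V_m$), the same use of an lcm to collapse a finite generating set of $k(V_\sbullet)$ into a single degree, and a numerical-semigroup argument to cover all sufficiently large $n$ in the support. The only cosmetic difference is that the paper first regrades so that $\gcd(S)=1$ and then proves the two-generator Frobenius fact for coprime $p,q$ by hand, whereas you keep $d=\gcd(S)$ and invoke the classical statement that $S$ contains all sufficiently large multiples of $d$; both are fine.
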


\begin{proof}
First, we note that if $\ell\in\mathbb N_{\geqslant 1}$ is an index such that $V_{\ell}$ contains a nonzero element $h$, then $k(V_m)\subset k(V_{m+\ell n})$ for any $m,n\in\mathbb N_{\geqslant 1}$. In fact, if $\{f,g\}\subset V_m$ and $g\neq 0$, then
\[
\frac{f}{g}=\frac{fh^n}{gh^n}\text{ and }\{fh^n,gh^n\}\subset V_{m+\ell n}
\]
for any $n\in\mathbb N_{\geqslant 1}$.

By changing the grading of $V_{\sbullet}$, we may assume without loss of generality that $\{n\in\mathbb N\,:\,V_n\neq\{0\}\}$ generates $\mathbb Z$ as a $\mathbb Z$-module.
There exist integers $\{n_1,\ldots,n_r\}\subset\mathbb N_{\geqslant 1}$ and nonzero elements $\{f_1,\ldots,f_r, g_1,\ldots,g_r\}\subset K$ such that $\{f_i,g_i\}\subset V_{n_i}$ for any $i\in\{1,\ldots,r\}$ and that  $k(V_\sbullet)=k(f_1/g_1,\ldots,f_r/g_r)$. Set $p:=\mathrm{lcm}(n_1,\ldots,n_r)$.
By the above observation, we can assume $\{f_i,g_i\}\subset V_p$ for any $i$, and one has \[k(V_\sbullet)=k(f_1/g_1,\ldots,f_r/g_r)=k(V_p).\]
Moreover, by the hypothesis that $\{n\in\mathbb N\,:\,V_n\neq\{0\}\}$ generates $\mathbb Z$ as a $\mathbb Z$-module, we can find a positive integer $q$ such that $p$ and $q$ are coprime and that $k(V_p)=k(V_q)=k(V_\sbullet)$.

To conclude the proof, it suffices to show that $\{pm+qn\,:\,m,n\in\mathbb N\}$ contains every sufficiently positive integer.
Since $p$ and $q$ are coprime, we can fix $x,y\in\mathbb Z$ such that $px-qy=1$.
Moreover, we can assume that both $x$ and $y$ are positive.
For any $r$ with $0\leqslant r<q$ and any $n$ with $n\geqslant (q-1)y$,
\[
qn+r=prx+q(n-ry)\in\{pm+qn\,:\,m,n\in\mathbb N\}.
\]
Hence $\{pm+qn\,:\,m,n\in\mathbb N\}$ contains every integer not less than $q(q-1)y$.
\end{proof}

\begin{rema}\label{Rem:twIST}
Let $V_\sbullet$ be a graded linear series of $K/k$ and $f$ be a nonzero element of $K$. We denote by $V_\sbullet(f)$ the graded linear series $\bigoplus_{n\in\mathbb N}f^nV_nT^n$, where $f^nV_n:=\{f^ng\,:\,g\in V_n\}$, called the \emph{twist of $V_\sbullet$ by $f$}. Note that the twist does not change the field of rational functions: one has $k(V_\sbullet(f))=k(V_\sbullet)$ for any $f\in K\setminus\{0\}$.
\end{rema}

\begin{prop}\label{generation higher degree}
Let $W_\sbullet$ be a graded linear series of finite type of $K/k$. Let $n_0$ be an integer such that $n_0\geqslant 1$. There {exist} an integer $r\geqslant 1$ {and} a family $(f_iT^{n_i})_{i=1}^r$ of homogeneous elements in $W_\sbullet$ such that the following conditions are fulfilled:
\begin{enumerate}[(1)]
\item for any $i\in\{1,\ldots,r\}$, one has $n_i\geqslant n_0$;
\item for any integer $n\geqslant n_0$, the vector space {$W_n$} is generated by elements of the form $f_1^{a_1}\cdots f_r^{a_r}$, where $a_1,\ldots,a_r$ are natural numbers such that $a_1n_1+\cdots+a_rn_r=n$.
\end{enumerate}
\end{prop}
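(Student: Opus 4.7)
The plan is to start from a finite homogeneous generating set of the $k$-algebra $W_\sbullet$ and to reorganise its monomials into a finite list of blocks whose individual degrees have been pushed past $n_0$.

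More precisely, I would fix homogeneous generators $g_1T^{m_1},\ldots,g_sT^{m_s}$ of $W_\sbullet$ over $k$ with each $m_i\geqslant 1$ (possible because $W_0=k$), set $M:=\max_i m_i$, and define $f_1T^{n_1},\ldots,f_rT^{n_r}$ to be an enumeration of all monomials in the $g_i$'s whose total degree lies in the window $[n_0,\,2n_0+M)$. This list is finite because the $m_i$ are positive integers, and by construction each $n_j\geqslant n_0$, giving condition~(1).

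For condition~(2), observe that since the $g_i$ generate $W_\sbullet$ as a $k$-algebra, the vector space $W_n$ is spanned over $k$ by the monomials $g_1^{b_1}\cdots g_s^{b_s}$ of total degree~$n$. It therefore suffices to rewrite any such monomial of degree $n\geqslant n_0$ as a product $f_1^{a_1}\cdots f_r^{a_r}$ with $a_1n_1+\cdots+a_rn_r=n$. I would do this by a greedy partition of the multiset of factors: list the $g_i$'s occurring in the monomial (with multiplicities), process them one at a time, accumulate them into a current block, and close the block off (starting a new one) as soon as its accumulated degree first reaches~$n_0$. Since a single $g_i$ contributes at most $M$ to the degree, each closed block has degree in $[n_0,\,n_0+M)$. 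If the final block fails to reach $n_0$, the hypothesis $n\geqslant n_0$ forces the existence of at least one previously closed block, into which I absorb the tail; the merged block then has degree strictly less than $2n_0+M$. Every resulting block is thus a monomial in the $g_i$'s of degree in the chosen window, i.e.\ one of the $f_j$'s, and multiplying the blocks recovers the original monomial as a product of the $f_j$'s of the prescribed total degree.

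The only delicate point is making sure the greedy procedure never strands a tail of degree $<n_0$ with no preceding block to absorb it; this is exactly what the assumption $n\geqslant n_0$ rules out, and the upper bound $2n_0+M$ on the window is chosen precisely so that the absorption step lands back inside the finite list $\{f_1,\ldots,f_r\}$. Beyond this bookkeeping I do not expect any genuine obstacle.
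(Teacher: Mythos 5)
Your proof is correct and follows essentially the same strategy as the paper's: both arguments greedily partition the factors of a degree-$n$ monomial in the original generators into blocks whose degrees fall in a fixed window $[n_0,\,2n_0+\text{const})$, with the undersized tail absorbed into a previously closed block, and then take as the new generating family the finitely many homogeneous elements living in that window (the paper uses a basis of $W_{n_0}T^{n_0}\oplus\cdots\oplus W_{2n_0+d-2}T^{2n_0+d-2}$ where you use the monomials in the generators; this difference is immaterial).
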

\begin{proof}
Suppose that $W_\sbullet$ is generated by $W_1T\oplus\cdots \oplus W_dT^d$. We claim that the graded linear series
\[k\oplus\bigoplus_{n\geqslant n_0}W_nT^n\]
is generated by $W_{n_0}T^{n_0}\oplus\cdots\oplus W_{2n_0+d-2}T^{2n_0+d-2}$. Let $n$ be an integer such that $n\geqslant 2n_0+d-2$. Since $W_\sbullet$ is generated by $W_1T\oplus\cdots\oplus W_dT^d$, we obtain that 
\[W_n=\sum_{\begin{subarray}{c}(a_1,\ldots,a_d)\in\mathbb N^d\\
a_1+2a_2+\cdots+da_d=n
\end{subarray}}W_1^{a_1}\cdots W_d^{a_d}.\]
Let $(a_1,\ldots,a_d)$ be an element in $\mathbb N^d$ such that $a_1+2a_2+\cdots+da_d=n$. Since $n\geqslant 2n_0+d-2$, there exist an integer $m\geqslant 1$ and a family \[\big\{(a_1^{(i)},\ldots,a_d^{(i)})\,:\,i\in\{1,\ldots,m\}\big\}\]
of elements in $\mathbb N^d$ such that  
\[\forall\,j\in\{1,\ldots,d\},\quad a_j^{(1)}+\cdots+a_j^{(m)}=a_j,\]
\[\forall\,i\in\{1,\ldots,m-1\},\quad n_0\leqslant a_1^{(i)}+2a_2^{(i)}+\cdots+da_d^{(i)}\leqslant n_0+d-1,\]
and
\[n_0\leqslant a_1^{(m)}+2a_2^{(m)}+\cdots+da_d^{(m)}\leqslant 2n_0+d-2. \]
Therefore 
\[W_n=\sum_{\begin{subarray}{c}(b_{n_0},\ldots,b_{2n_0+d-2})\in\mathbb N^{n_0+d-1}\\
n_0b_{n_0}+\cdots+(2n_0+d-2)b_{2n_0+d-2}=n
\end{subarray}}W_{n_0}^{b_{n_0}}\cdots W_{2n_0+d-2}^{b_{2n_0+d-2}},\]
which concludes the claim {($b_j$ corresponds to the number of $i\in\{1,\ldots,m\}$ such that $a_1^{(i)}+2a_2^{(i)}+\cdots+da_d^{(i)}=j$)}. Finally it suffices to choose a family of homogeneous elements in $W_\sbullet$ which forms a basis of $W_{n_0}T^{n_0}\oplus\cdots\oplus W_{2n_0+d-2}T^{2n_0+d-2}$.
\end{proof}

\begin{lemm}\label{Lem: de type fini corps}
Let $K/k'/k$ be extensions of fields. We assume that the extension $K/k$ is finitely generated and the extension 
$k'/k$ is finite. Let $W_\sbullet'$ be a graded linear series of finite type of $K/ k'$ and let 
\[W_\sbullet=k\oplus\bigoplus_{n\in\mathbb N_{\geqslant 1}}W_n'T^n.\]
Then $W_\sbullet$ is a graded linear series of finite type of $K/k$.
\end{lemm}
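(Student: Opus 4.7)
The plan is to check the two requirements on $W_\sbullet$ separately: first that it is a graded linear series of $K/k$, and then that it is of finite type as a $k$-algebra.

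First I would verify that $W_\sbullet$ is a graded linear series of $K/k$ in the sense of Definition \ref{Def:gradedlinearseries}. Since $k'/k$ is finite, every positive-degree component $W_n=W_n'$, which is finite dimensional over $k'$, remains finite dimensional as a $k$-vector subspace of $K$; by construction $W_0=k$. That $W_\sbullet$ is a sub-$k$-algebra of $K[T]$ is immediate, because for $n,m\geqslant 1$ one has $W_n\cdot W_m=W_n'\cdot W_m'\subset W_{n+m}'=W_{n+m}$, and $k=W_0$ acts on each $W_n$ via the inclusion $k\subset k'$.

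The heart of the proof is the finite generation. Since $W_\sbullet'$ is a graded $k'$-algebra of finite type with $W_0'=k'$, one may take finitely many homogeneous generators of positive degree, say $h_1T^{e_1},\ldots,h_sT^{e_s}$ with $e_j\geqslant 1$; this is what Proposition \ref{generation higher degree} provides (with $n_0=1$), expressing every $W_n'$ for $n\geqslant 1$ as a $k'$-linear span of monomials $h_1^{a_1}\cdots h_s^{a_s}$ with $\sum_j a_je_j=n$. Fix a $k$-basis $\alpha_1,\ldots,\alpha_m$ of $k'$, chosen so that $\alpha_1=1$. Consider the finite family
\[
\mathcal F:=\{\alpha_ih_jT^{e_j}\,:\,1\leqslant i\leqslant m,\ 1\leqslant j\leqslant s\},
\]
which lies in $W_\sbullet$, since each $W_{e_j}=W_{e_j}'$ is a $k'$-module. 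I claim that $\mathcal F$ generates $W_\sbullet$ as a $k$-algebra.

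To prove the claim, take any nonzero homogeneous element $wT^n\in W_n$ with $n\geqslant 1$. Writing $w=\sum_{\mathbf a}\lambda_{\mathbf a}h_1^{a_1}\cdots h_s^{a_s}$ with $\lambda_{\mathbf a}\in k'$ and $\sum_j a_je_j=n$, and expanding each $\lambda_{\mathbf a}=\sum_{i}c_{i,\mathbf a}\alpha_i$ with $c_{i,\mathbf a}\in k$, it suffices to write each monomial $\alpha_i h_1^{a_1}\cdots h_s^{a_s}T^n$ as a product of elements of $\mathcal F$. Choose any index $j_0$ with $a_{j_0}\geqslant 1$ (which exists since $n\geqslant 1$); then
\[
\alpha_i h_1^{a_1}\cdots h_s^{a_s}T^n=(\alpha_ih_{j_0}T^{e_{j_0}})\cdot\prod_{j\neq j_0}(h_jT^{e_j})^{a_j}\cdot(h_{j_0}T^{e_{j_0}})^{a_{j_0}-1},
\]
where every factor belongs to $\mathcal F$ (using $\alpha_1=1$ to view each bare $h_jT^{e_j}$ as an element of $\mathcal F$). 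This expresses $wT^n$ as a $k$-linear combination of products of elements of $\mathcal F$, so $W_\sbullet$ is generated as a $k$-algebra by $\mathcal F$, and is therefore of finite type over $k$. There is no real obstacle here: the only subtlety is remembering to combine the basis element $\alpha_i\in k'$ with one of the positive-degree generators, which is possible precisely because $W_0'$ has been replaced by $k$ and all remaining information sits in positive degree.
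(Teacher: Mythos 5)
Your proof is correct and follows essentially the same route as the paper: take homogeneous positive-degree generators of $W_\sbullet'$ over $k'$, a $k$-basis of $k'$, and show their products generate $W_\sbullet$ over $k$ by expanding the $k'$-coefficients in that basis. Your normalisation $\alpha_1=1$ and the explicit absorption of $\alpha_i$ into one factor merely spell out a step the paper leaves implicit, and the appeal to Proposition \ref{generation higher degree} is harmless though not needed (gradedness alone gives homogeneous positive-degree generators).
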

\begin{proof}
Let $(f_iT^{n_i})_{i=1}^r$ be a system of generators of $W_{\sbullet}'$. Let $(\theta_j)_{j=1}^m$ be a basis of $k'$ over $k$. We claim that $W_\sbullet$ is generated by 
\begin{equation}\label{equ:generator theta}(\theta_jf_iT^{n_i})_{(i,j)\in\{1,\ldots,r\}\times\{1,\ldots,m\}}.\end{equation}
In fact, if $\varphi$ is an element of $W_n'$, then it can be written as
\[\sum_{\begin{subarray}{c}
\boldsymbol{a}=(a_1,\ldots,a_r)\in\mathbb N^r\\ a_1n_1+\cdots+a_rn_r=n
\end{subarray}}\lambda_{\boldsymbol{a}}f_1^{a_1}\cdots f_r^{a_r},\]
where the coefficients $\lambda_{\boldsymbol{a}}$ belong to $k'$. By writing $\lambda_{\boldsymbol{a}}$ as a linear combination of $(\theta_j)_{j=1}^m$, we obtain that $\varphi$ lies in the graded linear series of $K/k$ generated by \eqref{equ:generator theta}. The lemma is thus proved.
\end{proof}

\begin{defi}\label{Def: Kodaira-Iitaka}
Let $V_\sbullet$ be a graded linear series of $K/k$. We assume that there exists $n\in\mathbb N_{\geqslant 1}$ such that $V_n\neq\{0\}$. We define the \emph{Kodaira-Iitaka dimension} of $V_\sbullet$ as the transcendence degree of $k(V_\sbullet)$ over $k$. We refer the readers to \cite[\S3]{Kaveh_Khovanskii12b} and \cite[\S2]{Cutkosky14} for the definition of Kodaira-Iitaka dimension in the setting of graded linear series of Cartier divisors or line bundles. If $V_n=\{0\}$ for any $n\in\mathbb N_{\geqslant 1}$, then by convention the \emph{Kodaira-Iitaka dimension} of $V_\sbullet$ is defined to be $-\infty$.
\end{defi}

\begin{theo}\label{Thm: graded case}
Let $V_\sbullet$ be a graded linear series of $K/k$. Assume that there exists a graded linear series of finite type $V_\sbullet'$ of $K/k$ which contains $V_\sbullet$. Then there exists a graded linear series of finite type $W_\sbullet$ of $K/k$ such that $V_\sbullet\subset W_\sbullet$ and $k(V_\sbullet)=k(W_\sbullet)$.
\end{theo}
\begin{proof}
\emph{Step 1: reduction to the case where $1\in V_1$ and $k(V_1')=k(V_\sbullet')$.} Let $\Theta:=\{n\in\mathbb N_{\geqslant 1}\,:\,V_n\neq\{0\}\}$. The assertion of the theorem is trivial when $\Theta=\varnothing$. In the following, we assume that $\Theta$ is not empty, and hence it is a subsemigroup of $\mathbb N_{\geqslant 1}$. Let $a\in\mathbb N_{\geqslant 1}$ be a generator of the subgroup of $\mathbb Z$ generated by $\Theta$. As $\bigoplus_{n\in\mathbb N}V_{an}'T^{an}$ is a $k$-algebra of finite type (see for example \cite[Lemme~II.2.1.6.(iv)]{EGAII}), by changing the grading we can reduce the problem to the case where $a=1$. In particular, there exists an $m\in\mathbb N_{\geqslant 1}$ such that the vector spaces $V_m$ and $V_{m+1}$ are both nonzero. We pick $x\in V_m\setminus\{0\}$ and $y\in V_{m+1}\setminus\{0\}$. By replacing $V_\sbullet$ by the graded linear series generated by $V_\sbullet$ and $(y/x)T$ and replacing $V_\sbullet'$ by the graded linear series generated by $V_\sbullet'$ and $(y/x)T$ (this procedure does not change the fields of {rational functions}), we reduce the problem to the case where $V_1\neq\{0\}$. Finally, by replacing $V_\sbullet$ by $V_\sbullet(f^{-1})$ and $V_\sbullet'$ by $V_\sbullet'(f^{-1})$ {(see Remark \ref{Rem:twIST} for the notation)}, where $f$ is a nonzero element of $V_1$ (again this procedure does not change the fields of fractions, see Remark \ref{Rem:twIST}), we reduce the problem to the case where $1\in V_1$. Moreover, by replacing $V_\sbullet'$ by the graded linear series generated by $V_\sbullet'$ and $\alpha_1T,\ldots,\alpha_mT$, where $\{\alpha_1,\ldots,\alpha_m\}$ is a system of generators of $k(V_\sbullet')$ over $k$, we may assume that $k(V_1')=k(V_\sbullet')$.

\emph{Step 2: reduction to the simple extension case by induction.} As explained in the previous step, we can assume $1\in V_1$ and $k(V_1')=k(V_\sbullet')$. Since $k(V_\sbullet')/k(V_\sbullet)$ is a finitely generated extension of fields (where $V_1$ is assumed to contain $1$), there exist successive extensions of fields
\[k(V_\sbullet)=K_0\subsetneq K_1\subsetneq\ldots\subsetneq K_b=k(V_\sbullet')\]
such that each extension $K_i/K_{i-1}$ is generated by one element of $V_1'$. 

Assume that the theorem has been proved for the case where $k(V_\sbullet')/k(V_\sbullet)$ is generated by one element in $V_1'$. Then by induction we can show that, for any $i\in\{0,\ldots,b\}$, there exists a graded linear series of finite type $W_\sbullet^{(i)}$, which contains $V_\sbullet$ and such that $k(W_\sbullet^{(i)})=K_i$. In fact, we can choose $W^{(r)}_\sbullet=V_\sbullet'$. Assume that we have chosen a graded linear series of finite type $W_\sbullet^{(i+1)}$  such that $W_\sbullet^{(i+1)}\supset V_\sbullet$ and $k(W_\sbullet^{(i+1)})=K_{i+1}$, where $i\in\{0,\ldots,b-1\}$. Let $V^{(i)}_\sbullet$ be the graded linear series generated by $V_\sbullet$ and a finite system of generators of $K_i/k$ in $V_1'$. The graded linear series $V^{(i)}_\sbullet$ contains $V_\sbullet$ and $K_i=k(V^{(i)}_1)$. Without loss of generality we may assume that $V^{(i)}_\sbullet\subset W^{(i+1)}_\sbullet$ and that the extension $K_{i+1}/K_i$ is generated by one element $\alpha$ in $W_1^{(i+1)}$, otherwise we just replace $W^{(i+1)}_\sbullet$ by the graded linear series generated by $W^{(i+1)}_\sbullet$, $V^{(i)}_1$ and a generator of the extension $K_{i+1}/K_i$ in $V_1'$. It is a graded linear series of finite type which contains $V_\sbullet$ and has $K_{i+1}$ as its field of rational functions. If the theorem has been proved for the simple extension case, then we obtain the existence of a graded linear series of finite type $W_\sbullet^{(i)}$ such that $V_\sbullet\subset W_\sbullet$ and $k(W_\sbullet^{(i)})=K_i$.

Note that the graded linear series $W_\sbullet=W_\sbullet^{(0)}$ satisfies the conditions $V_\sbullet\subset W_\sbullet$ and $k(V_\sbullet)=k(W_\sbullet)$. Therefore, to prove the theorem it suffices to prove the particular case where the extension $k(V_\sbullet')/k(V_\sbullet)$ is generated by one element in $V_1'$. Similarly, to prove the theorem under the supplementary condition that the extension $k(V_\sbullet')/k(V_\sbullet)$ is algebraic, it suffices to prove the particular case where the extension $k(V_\sbullet')/k(V_\sbullet)$ is generated by one element in $V_1'$ which is algebraic over $k(V_\sbullet)$. 

\emph{Step 3: algebraic extension case.}
In this step, we prove the theorem under the assumption that the extension $k(V_\sbullet')/k(V_\sbullet)$ is algebraic. As explained in the previous two steps, we may suppose without loss of generality that $1\in V_1$, $k(V_1')=k(V_\sbullet')$ and the extension $k(V_\sbullet')/k(V_\sbullet)$ is generated by one element $\alpha$ in $V_1'$ which is algebraic over $k(V_\sbullet)$.

 Let 
\[G(X):=X^{\delta}+\xi_1X^{\delta-1}+\cdots+\xi_\delta\in k(V_\sbullet)[X]\]
be the minimal polynomial of $\alpha$ over $k(V_\sbullet)$. By Proposition \ref{generation higher degree}, there exist an integer $r\in\mathbb N_{\geqslant 1}$ and  homogeneous elements $(f_iT^{n_i})_{i=1}^r$ with $n_i\geqslant \delta$ for any $i\in\{1,\ldots,r\}$, which generates the graded linear series
\[k\oplus\bigoplus_{n\geqslant \delta}V_n'T^n.\]
Since $1\in V_n\subset V_n'$ for any $n\in\mathbb N_{\geqslant 1}$, for any $i\in\{1,\ldots,r\}$, one has $f_i\in k(V_\sbullet')$. Moreover, since the extension $k(V_\sbullet')/k(V_\sbullet)$ is generated by $\alpha$ (which is of degree $\delta$ over $k(V_\sbullet)$), there {exist} polynomials \[F_i(X):=\eta_{i,1}X^{\delta-1}+\cdots+\eta_{i,\delta}\in k(V_\sbullet)[X],\quad i\in\{1,\ldots,r\}\]
such that $f_i=F_i(\alpha)$ for any $i\in\{1,\ldots,r\}$. We introduce the following polynomials in $k(V_\sbullet)[T,Y]$
\begin{gather*}
\widetilde G(T,Y)=Y^\delta+(\xi_1T)Y^{\delta-1}+\cdots+\xi_\delta T^\delta,\\
\widetilde F_{i}(T,Y)=(\eta_{i,1}T^{n_i-\delta+1})Y^{\delta-1}+\cdots+\eta_{i,\delta}T^{n_i}.
\end{gather*} 
Note that one has $\widetilde G(T,TX)=G(X)T^\delta$ and $\widetilde F(T,TX)=F_i(X)T^{n_i}$.

We let $W_\sbullet$ be the graded linear series generated by $V_1T\oplus\cdots\oplus V_{\delta-1}T^{\delta-1}$ and the elements $\xi_1T,\ldots,\xi_\delta T^{\delta}$, $\eta_{i,1}T^{n_i-\delta+1},\ldots,\eta_{i,\delta}T^{n_i}$ ($i\in\{1,\ldots,r\}$). It is a graded linear series of finite type of $K/k$ such that $k(W_\sbullet)\subset k(V_\sbullet)$. It remains to prove that $W_\sbullet$ contains $V_\sbullet$. Clearly $V_n\subset W_n$ for $n\in\{1,\ldots,\delta-1\}$. Let $n\in\mathbb N_{\geqslant\delta}$ and $\varphi$ be an element in $V_n\subset V_n'$. By definition $\varphi$ can be written in the form
\[\sum_{\begin{subarray}{c}\boldsymbol{a}=(a_1,\ldots,a_r)\in\mathbb N^r\\
a_1n_1+\cdots+a_rn_r=n
\end{subarray}}\lambda_{\boldsymbol{a}}f_1^{a_1}\cdots f_r^{a_r}=\sum_{\begin{subarray}{c}\boldsymbol{a}=(a_1,\ldots,a_r)\in\mathbb N^r\\
a_1n_1+\cdots+a_rn_r=n
\end{subarray}}\lambda_{\boldsymbol{a}}F_1(\alpha)^{a_1}\cdots F_r(\alpha)^{a_r},\]
where $\lambda_{\boldsymbol{a}}\in k$. We consider the element
\[\widetilde F(T,Y)=\sum_{\begin{subarray}{c}\boldsymbol{a}=(a_1,\ldots,a_r)\in\mathbb N^r\\
a_1n_1+\cdots+a_rn_r=n
\end{subarray}}\lambda_{\boldsymbol{a}}\widetilde F_1(T,Y)^{a_1}\cdots \widetilde F_r(T,Y)^{a_r}\in k(V_\sbullet)[T,Y].
\]
Viewed as a polynomial on $Y$ with coefficients in $k(V_\sbullet)[T]$, the coefficients of $\widetilde F(T,Y)$ can be written as the values of certain polynomials  on $\eta_{i,1}T^{n_i-\delta+1},\ldots,\eta_{i,\delta}T^{n_i}$ ($i\in\{1,\ldots,r\}$).
Note that one has \[\widetilde F(T,TX)=\sum_{\begin{subarray}{c}\boldsymbol{a}=(a_1,\ldots,a_r)\in\mathbb N^r\\
a_1n_1+\cdots+a_rn_r=n
\end{subarray}}\lambda_{\boldsymbol{a}}F_1(X)^{a_1}\cdots F_r(X)^{a_r}T^n.\]
Therefore $\widetilde F(T,T\alpha)-\varphi T^n=0$ in $k(V_\sbullet')[T]$. Since $G$ is the minimal polynomial of $\alpha$, an Euclidean division argument shows that $\varphi T^n$ can be written as a polynomial of $\xi_1T,\ldots,\xi_\delta T^{\delta}$, $\eta_{i,1}T^{n_i-\delta+1},\ldots,\eta_{i,\delta}T^{n_i}$  ($i\in\{1,\ldots,r\}$) with coefficients in $k$. The theorem is thus proved in the particular case where $k(V_\sbullet')/k(V_\sbullet)$ is an algebraic extension.

\emph{Step 4: general case.} In this step, we prove the theorem in the general case. As explained in steps 1 and 2, we may assume that $1\in V_1$, $k(V_1')=k(V_\sbullet')$ and that the extension $k(V_\sbullet')/k(V_\sbullet)$ is generated by one element $\alpha$ in $V_1'$ which is transcendental over $k(V_\sbullet)$ (the algebraic case has already been treated in Step 3). 

Since $V_\sbullet'$ is of finite type, there exist an integer $r\geqslant 1$ and homogeneous elements $(f_iT^{n_i})_{i=1}^r$ which generate $V_\sbullet'$ as a $k$-algebra. As $k(V_\sbullet')/k(V_\sbullet)$ is generated by $\alpha$, there exists rational functions $P_i/Q_i$ ($i\in\{1,\ldots,r\}$), where $\{P_i,Q_i\}\subset k(V_\sbullet)[X]$, $Q_i\neq 0$, such that $f_i=P_i(\alpha)/Q_i(\alpha)$. 

Let $\theta$ be an element in the algebraic closure $\overline k$ of $k$, such that $Q_i(\theta)\neq 0$ for any $i\in\{1,\ldots,r\}$. Let $\widehat k=k(\theta)$ and $\widehat K=K(\theta)$. Then $\widehat K/K$ is a finite extension of field, and $\widehat K/\widehat k$ is a purely transcendental extension generated by $\alpha$. Let $\widehat V_\sbullet$ and $\widehat V_\sbullet'$ be the graded sub-$\widehat k$-algebra of $\widehat K[T]$ generated by $V_\sbullet$ and $V_\sbullet'$ respectively. Then $\widehat V_{\sbullet}'$ is generated as a $\widehat k$-algebra by $(f_iT^{n_i})_{i=1}^r$. We let $\widehat W_\sbullet$ be the graded linear series of $\widehat K/\widehat k$ generated by $T$ and elements of the form $(P_i(\theta)/Q_i(\theta))T^{n_i}$, where $i\in\{1,\ldots,r\}$. This is a graded linear series of finite type. Note that $P_i(\theta)/Q_i(\theta)\in \widehat k(\widehat V_\sbullet)$ for any $i\in\{1,\ldots,r\}$. Therefore $\widehat k(\widehat W_\sbullet)\subset \widehat k(\widehat V_\sbullet)$.

Let $n\in\mathbb N_{\geqslant 1}$ and $\varphi$ be an element of $\widehat V_n\subset \widehat V_n'$. By definition $\varphi$ can be written in the form
\[\varphi=\sum_{\begin{subarray}{c}\boldsymbol{a}=(a_1,\ldots,a_r)\in\mathbb N^r\\
a_1n_1+\cdots+a_rn_r=n
\end{subarray}}\lambda_{\boldsymbol{a}}f_1^{a_1}\cdots f_r^{a_r},\]
where the coefficients $\lambda_{\boldsymbol{a}}$ belong to $\widehat k$.
As $\alpha$ is transcendental over $\widehat k(\widehat V_\sbullet)$, we obtain that
\[\varphi=\sum_{\begin{subarray}{c}\boldsymbol{a}=(a_1,\ldots,a_r)\in\mathbb N^r\\
a_1n_1+\cdots+a_rn_r=n
\end{subarray}}\lambda_{\boldsymbol{a}}\prod_{i=1}^r\Big(\frac{P_i(\theta)}{Q_i(\theta)}\Big)^{a_i},\]
which shows that ${\varphi}\in W_n$. Therefore one has $\widehat V_\sbullet\subset \widehat W_\sbullet$, which implies that $\widehat k(\widehat V_\sbullet)=\widehat k(\widehat W_\sbullet)$ since we have already seen that $\widehat k(\widehat W_\sbullet)\subset \widehat k(\widehat V_\sbullet)$.

Let 
\[W_\sbullet':=k\oplus\bigoplus_{n\in\mathbb N_{\geqslant 1}}\widehat W_nT^n.\]
Since $\widehat W_\sbullet$ is a graded linear series of finite type of $\widehat K/\widehat k$, by Lemma \ref{Lem: de type fini corps} we obtain that $W_\sbullet'$ is a graded linear series of $\widehat K/k$ of finite type. Moreover, one has $V_\sbullet\subset W_\sbullet'$ and $k(W_\sbullet')\subset \widehat k(\widehat W_\sbullet)=\widehat k(\widehat{V}_\sbullet)$ is a finite extension of $k(V_\sbullet)$. Therefore, by the algebraic extension case of the theorem proved in Step 3 we obtain the existence of a graded linear series of finite type $W_\sbullet$ of $\widehat K/k$ such that $V_\sbullet\subset W_\sbullet$ and that $k(V_\sbullet)=k(W_\sbullet)$. Moreover, the {equality $k(V_\sbullet)=k(W_\sbullet)$} and the assumption $1\in V_1\subset W_1$ imply that $W_\sbullet$ is a graded linear series of $k(V_\sbullet)/k$ (and hence a graded linear series of $K/k$). The theorem is thus proved.
\end{proof}

\section{A subfinite version of Zariski's theorem}

\subsection{Preliminaries}

In this section, we collect several basic facts on the valuations and on the graded rings, which we use to show Theorem \ref{Thm:Zariski projective}.

\subsubsection{}
Basic notions of valuations and their centres are in Notation and conventions \ref{NC:valuations} and \ref{NC:centre}.

\begin{lemm}\label{lem: centre of the restriction}
Let $\pi:X\to X'$ be a dominant morphism of integral separated $k$-schemes, $K:=\mathrm{Rat}(X)$, $K':=\mathrm{Rat}(X')$, and $\nu$ a discrete valuation of $K$ over $k$.
If the centre $c_X(\nu)$ of $\nu$ in $X$ exists, then $\pi(c_X(\nu))$ is the center of $\nu|_{K'}$ in $X'$, namely $\pi(c_X(\nu))=c_{X'}(\nu|_{K'})$.
\end{lemm}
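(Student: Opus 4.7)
The plan is to unwind the two defining conditions of a centre and apply them to $x:=c_X(\nu)$ and $x':=\pi(x)$, using only that $\pi$ is dominant (so $K'\hookrightarrow K$ and the residue map on local rings is injective) and that $\pi^{\sharp}$ is a local homomorphism.

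First I would note that since $\pi$ is dominant and $X,X'$ are integral, the generic point of $X$ is sent to the generic point of $X'$, and on stalks we get an injective local homomorphism $\pi^{\sharp}_x:\mathcal O_{X',x'}\hookrightarrow\mathcal O_{X,x}$. Composing with the inclusion $\mathcal O_{X,x}\subset O_{\nu}$ (which holds by hypothesis that $x$ is the centre of $\nu$ in $X$) gives $\mathcal O_{X',x'}\subset O_{\nu}\cap K'=O_{\nu|_{K'}}$. This verifies the first of the two conditions in Notation \ref{NC:centre} for $x'$ being the centre of $\nu|_{K'}$ in $X'$.

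Next I would check the maximal-ideal condition $\mathfrak m_{x'}=\mathfrak m_{\nu|_{K'}}\cap\mathcal O_{X',x'}$. Since $\pi^{\sharp}_x$ is a local homomorphism, $\mathfrak m_{x'}=(\pi^{\sharp}_x)^{-1}(\mathfrak m_x)$, i.e.\ under the inclusion $\mathcal O_{X',x'}\subset\mathcal O_{X,x}$ one has $\mathfrak m_{x'}=\mathfrak m_x\cap\mathcal O_{X',x'}$. Using the second defining property of the centre of $\nu$ in $X$, namely $\mathfrak m_x=\mathfrak m_\nu\cap\mathcal O_{X,x}$, this becomes
\[
\mathfrak m_{x'}=(\mathfrak m_\nu\cap\mathcal O_{X,x})\cap\mathcal O_{X',x'}=\mathfrak m_\nu\cap\mathcal O_{X',x'}=(\mathfrak m_\nu\cap K')\cap\mathcal O_{X',x'}=\mathfrak m_{\nu|_{K'}}\cap\mathcal O_{X',x'}.
\]
This is exactly the second required condition, so $x'$ is a centre of $\nu|_{K'}$ in $X'$. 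The uniqueness of centres (valid because $X'$ is separated over $k$, by the valuative criterion of separatedness invoked in Notation \ref{NC:centre}) then yields $\pi(c_X(\nu))=c_{X'}(\nu|_{K'})$.

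There is no real obstacle here; the only point that deserves a brief check is that the restriction $\nu|_{K'}$ is indeed meaningful, i.e.\ that $K'$ embeds into $K$ as a subfield via $\pi$. This is immediate from dominance of $\pi$ between integral schemes, so the whole argument is essentially a diagram chase on stalks and a two-line manipulation of intersections with $K'$.
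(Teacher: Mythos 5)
Your proof is correct and follows essentially the same route as the paper's: both verify the two defining conditions of a centre at $\pi(c_X(\nu))$ by identifying $O_{\nu|_{K'}}=O_\nu\cap K'$ and $\mathfrak m_{\nu|_{K'}}=\mathfrak m_\nu\cap K'$ and chasing through the local homomorphism on stalks induced by the dominant morphism. The only cosmetic difference is that you derive the maximal-ideal condition from $\mathfrak m_{x'}=\mathfrak m_x\cap\mathcal O_{X',x'}$, whereas the paper deduces it from the inclusion $\mathfrak m_{\pi(c_X(\nu))}\subset\mathfrak m_{\nu|_{K'}}$ together with maximality of $\mathfrak m_{\pi(c_X(\nu))}$.
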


\begin{proof}
Since the morphism $\pi$ is dominant, it induces an injective homomorphism of fields $\mathrm{Rat}(X')\rightarrow\mathrm{Rat}(X)$, which allows to consider $K'$ as a subfield of $K$. Recall that the centre $c_X(\nu)$ is the unique point $x\in X$ satisfying $\mathcal O_{X,x}\subset O_{\nu}$ and $\mathfrak m_x=\mathfrak m_{\nu}\cap\mathcal O_{X,x}$ (see Notation and conventions {\bf\ref{NC:centre}}).
Note that 
\[
O_{\nu|_{K'}}=\left\{f\in K'\,:\,\nu(f)\geqslant 0\right\}=O_{\nu}\cap K',\quad\text{and}\quad \mathfrak m_{\nu|_{K'}}=\mathfrak m_{\nu}\cap K'.
\]
Hence $O_{X',\pi(c_X(\nu))}\subset O_{\nu|_{K'}}$ and $\mathfrak m_{\pi(c_X(\nu))}\subset\mathfrak m_{\nu|_{K'}}$ (which implies $\mathfrak m_{\pi(c_X(\nu))}=\mathfrak m_{v|_{K'}}\cap\mathcal O_{X',\pi(c_X(\nu))}$ since $\mathfrak m_{\pi(c_X(\nu))}$ is a maximal ideal).
\end{proof}

\begin{lemm}\label{lem: extension of valuation}
Let $K/K'$ be a field extension of finite type.
Then any discrete valuation $\nu'$ of $K'$ extends to at least one discrete valuation $\nu$ of $K$  such that the following diagram is commutative
\[
\xymatrix{K^{\prime\times} \ar[r]^-{\nu'} \ar[d] & \mathbb{Q} \\ 
K^{\times} \ar[ur]_-{\nu}
}
\]
(see Notation and conventions \ref{NC:valuations}).
\end{lemm}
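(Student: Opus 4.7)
The plan is to argue by induction on the number of generators of $K$ over $K'$. Write $K=K'(x_1,\ldots,x_n)$ and set $L_i:=K'(x_1,\ldots,x_i)$, so $L_0=K'$ and $L_n=K$. Once the one-generator case is handled, successive application reduces the general case to it. Thus it suffices to treat $K=K'(x)$, splitting into the transcendental and algebraic cases.

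For the transcendental case, in which $x$ is transcendental over $K'$, I would use the Gauss extension: for $f=\sum_{i}a_ix^i\in K'[x]$, set
\[
\nu(f):=\min_{i}\nu'(a_i),
\]
and extend to $K=K'(x)$ by $\nu(f/g):=\nu(f)-\nu(g)$. Additivity $\nu(f+g)\geqslant\min(\nu(f),\nu(g))$ is immediate, while multiplicativity $\nu(fg)=\nu(f)+\nu(g)$ is the standard Gauss lemma: after scaling $f$ and $g$ so that $\nu(f)=\nu(g)=0$, their reductions in $\kappa(\nu')[x]$ are nonzero, and $\kappa(\nu')[x]$ is a domain, so $\nu(fg)=0$ as well. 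Since the value group of $\nu$ coincides with $\nu'(K'^{\times})$, the extension is automatically discrete in the sense of the paper, and by construction $\nu|_{K'^{\times}}=\nu'$.

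For the algebraic case, in which $K/K'$ is finite, I would invoke the classical fact that any valuation on a field extends to any algebraic extension. Concretely, let $A$ be the integral closure of $O_{\nu'}$ in $K$. By going-up there exists a prime ideal $\mathfrak P\subset A$ with $\mathfrak P\cap O_{\nu'}=\mathfrak m_{\nu'}$, and the localisation $O:=A_{\mathfrak P}$ is a valuation ring of $K$ (since $A$ is the integral closure of a valuation ring in an algebraic extension, each such localisation is a valuation ring). The valuation $\tilde\nu$ associated with $O$, normalised so that it takes values in $\mathbb Q$, satisfies $\tilde\nu|_{K'^{\times}}=c\cdot\nu'$ for some $c\in\mathbb Q_{>0}$; rescaling by $c^{-1}$ gives $\nu$ with $\nu|_{K'^{\times}}=\nu'$. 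The fundamental inequality $ef\leqslant[K:K']$ bounds the index $[\nu(K^{\times}):\nu'(K'^{\times})]$ by a finite integer $e$, so $\nu(K^{\times})\subset\tfrac{1}{e}\nu'(K'^{\times})\subset\mathbb Q$ is still a cyclic subgroup of $(\mathbb Q,+)$, i.e.\ $\nu$ is discrete in the paper's sense.

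The main obstacle, such as it is, lies in the algebraic case: one must simultaneously verify that the localisation $A_{\mathfrak P}$ genuinely is a valuation ring of $K$ (not of some intermediate field) and that its value group remains monogenerated in $\mathbb Q$. Both points are standard consequences of the finiteness of $[K:K']$ and the one-dimensionality of $O_{\nu'}$, but they are where the discreteness hypothesis is used in an essential way.
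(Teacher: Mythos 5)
Your proposal is correct, and its skeleton coincides with the paper's: the same induction reduces to a single generator, and in the transcendental case the paper uses exactly the same Gauss extension, with discreteness immediate because the value group does not change. The only real divergence is in the algebraic case. The paper completes $K'$ with respect to $\nu'$, embeds $K$ into the algebraic closure of $\widehat{K'}$, and uses the norm formula $\omega(x)=\frac{1}{[L:\widehat{K'}]}\nu'(\mathrm{Norm}_{L/\widehat{K'}}(x))$ for the unique extension to the finite extension $L=\widehat{K'}\cdot K$, then restricts to $K$; discreteness falls out of the containment $\nu(K^{\times})\subset\frac{1}{[L:\widehat{K'}]}\nu'(K'^{\times})$. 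You instead take the integral closure $A$ of $O_{\nu'}$ in $K$, pick a prime over $\mathfrak m_{\nu'}$ (this is lying-over rather than going-up, a purely terminological slip), localise, and bound the value group via the fundamental inequality $ef\leqslant[K:K']$. Both routes are standard and both yield the same finite-index containment of value groups, so discreteness in the paper's sense (a monogenerated subgroup of $\mathbb Q$) holds either way; your version trades the theory of complete valued fields for the fact that localisations of the integral closure of a valuation ring in an algebraic extension are again valuation rings, which you correctly flag as the point needing care. The only edge case worth a sentence in either treatment is a trivial $\nu'$, for which the trivial valuation on $K$ serves directly.
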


\begin{proof}
By induction it suffices to treat the case where the extension $K/K'$ is generated by one element $\alpha$. If $\alpha$ is transcendental over $K'$, then $K=K'(\alpha)$ is canonically isomorphic to the field of rational function in one variable. Therefore the valuation $\nu:K\rightarrow\mathbb Q\cup\{+\infty\}$ such that
\[\nu(a_0+a_1\alpha+\cdots+a_n\alpha^n)=\min(\nu'(a_0),\ldots,\nu'(a_n))\]
for any $a_0+a_1X+\cdots+a_nX^n\in K'[X]$ is a valuation extending $\nu'$. The valuations $\nu'$ and $\nu$ have the same image and hence $\nu$ is discrete.

Assume that $\alpha$ is algebraic over $K'$. Let $\widehat{K'}$ be the  completion of $K'$ with respect to $\nu'$, on which the valuation $\nu'$ extends in a unique way. We choose an embedding of $K$ in the algebraic closure $\widehat{K'}{}^{\mathrm{a}}$ of $\widehat{K'}$ and let $L$ be the subfield of $\widehat{K'}{}^{\mathrm{a}}$ generated by $\widehat{K'}$ and $K$. Then $L$ is a finite extension of $\widehat{K'}$, on which there is a unique valuation $\omega$ extending $\nu'$ such that 
\[\forall\,x\in L,\quad \omega(x):=\frac{1}{[K:\widehat{K'}]}\nu'(\mathrm{Norm}_{L/\widehat{K'}}(x)).\]
Let $\nu$ be the restriction of $\omega$ on $K$. It is a valuation extending $\nu'$. Moreover, it is discrete since $\nu(K^{\times})\subset \frac{1}{[K:\widehat{K'}]}\nu'(K'{}^{\times})$.
\end{proof}


\begin{lemm}\label{lem: general estimation}
Let $K/k$ be a field extension and let $\nu$ be any discrete valuation of $K$ over $k$.
Let $W_{\sbullet}$ be a graded linear series of $K/k$ of finite type and let $(f_iT^{d_i})_{i=1}^r$ be a system of generators of $W_{\sbullet}$ over $k$.
Set
\begin{equation}
a:=\min\left\{\frac{\nu(f_1)}{d_1},\dots,\frac{\nu(f_r)}{d_r}\right\}.
\end{equation}
Then $W_n\subset\{\phi\in K\,:\,\nu(\phi)\geqslant na\}$ for every $n\in\mathbb N$.
\end{lemm}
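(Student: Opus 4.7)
The plan is to use the generation hypothesis to reduce the inequality to a statement about monomials in the $f_i$, and then exploit the basic properties of a valuation (additivity on products, and the ultrametric inequality on sums).

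More precisely, I would first observe that since $(f_iT^{d_i})_{i=1}^r$ generate $W_\sbullet$ as a $k$-algebra, any homogeneous element $\varphi T^n\in W_nT^n$ can be written as a finite $k$-linear combination
\[
\varphi T^n=\sum_{\boldsymbol a\in E_n}\lambda_{\boldsymbol a}\, f_1^{a_1}\cdots f_r^{a_r}T^n,
\]
where $E_n:=\{\boldsymbol a=(a_1,\dots,a_r)\in\mathbb N^r\,:\,a_1d_1+\cdots+a_rd_r=n\}$ and $\lambda_{\boldsymbol a}\in k$. Cancelling $T^n$, this expresses $\varphi$ as a $k$-linear combination of the monomials $f_1^{a_1}\cdots f_r^{a_r}$ indexed by $E_n$.

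Next I would compute the valuation of such a monomial using the fact that $\nu$ is a homomorphism on $K^\times$:
\[
\nu(f_1^{a_1}\cdots f_r^{a_r})=\sum_{i=1}^r a_i\nu(f_i)=\sum_{i=1}^r a_id_i\cdot\frac{\nu(f_i)}{d_i}\geqslant a\sum_{i=1}^r a_id_i=na,
\]
where the inequality follows from the definition of $a$ as the minimum of the $\nu(f_i)/d_i$ together with $a_id_i\geqslant 0$. (If some $\nu(f_i)=+\infty$, i.e.\ $f_i=0$, the monomials involving that $f_i$ are zero and may be discarded.)

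Finally, since $\nu$ is trivial on $k^\times$ one has $\nu(\lambda_{\boldsymbol a})\geqslant 0$ for every $\boldsymbol a$, and the ultrametric inequality for valuations gives
\[
\nu(\varphi)\geqslant\min_{\boldsymbol a\in E_n}\nu\bigl(\lambda_{\boldsymbol a}f_1^{a_1}\cdots f_r^{a_r}\bigr)\geqslant na,
\]
which is the desired conclusion. There is no genuine obstacle here; the only small point to be careful about is the degenerate case $\varphi=0$ (handled by the convention $\nu(0)=+\infty$) and the possibility that some generator $f_i$ vanishes, both of which are accommodated by the $\mathbb Q\cup\{+\infty\}$-valued convention for $\nu$ recalled in Notation and Conventions \ref{NC:valuations}.
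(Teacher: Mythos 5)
Your proof is correct and follows essentially the same route as the paper's: expand an element of $W_n$ as a $k$-linear combination of monomials $f_1^{a_1}\cdots f_r^{a_r}$ with $a_1d_1+\cdots+a_rd_r=n$, bound each monomial's valuation below by $na$ using additivity of $\nu$, and conclude by the ultrametric inequality. Your extra care with the degenerate cases ($\varphi=0$ or some $f_i=0$) is a welcome addition but does not change the argument.
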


\begin{proof}
Any element in $W_n$ can be written in the form
\[
\sum_{d_1n_1+\dots+d_rn_r=n}\alpha_{(n_1,\dots,n_r)}f_1^{n_1}\cdots f_r^{n_r}
\]
($\alpha_{(n_1,\dots,n_r)}\in k$). Then
\[
\nu\left(\sum_{d_1n_1+\dots+d_rn_r=n}\alpha_{(n_1,\dots,n_r)}f_1^{n_1}\cdots f_r^{n_r}\right)\geqslant\min\left\{\sum_{i=1}^r n_i\nu(f_i)\right\}\geqslant an.
\]
\end{proof}

\subsubsection{}

Let $R_{\sbullet}$ be a graded ring which is generated as $R_0$-algebra by a finite family of elements in $R_1$ and let $P:=\mathrm{Proj}(R_{\sbullet})$.
For each homogeneous element $a\in R_{\geqslant 1}$, let
\begin{equation}
(R_{\sbullet})_{(a)}:=\left\{\frac{f}{a^p}\,:\,p\in\mathbb N,\;\deg f=p\deg a\right\}
\end{equation}
be the degree $0$ component of the localisation $R_{\sbullet}[1/a]$, and let $D_{\mathrm{Proj}(R_{\sbullet})+}(a):=\Spec((R_{\sbullet})_{(a)})$ denote the affine open subscheme of $\mathrm{Proj}(R_{\sbullet})$ defined by the non-vanishing of $a$.

Set $\mathcal O_P(n):=\widetilde{R(n)_{\sbullet}}$ {(see Notation and conventions {\bf\ref{graded coherent sheaf}})}.
Given an $s\in R_n$, the local sections $s/1\in H^0(D_{P+}(a),\mathcal O_P(n))=(R(n)_{\sbullet})_{(a)}$ for $a\in R_1$ glue up to a global section $\alpha_n(s)\in H^0(P,\mathcal O_P(n))$.
The following lemmas are well-known.

\begin{lemm}[{\cite[Proposition~II.2.7.3]{EGAII}}]\label{lem:prelim1}
Let $M_{\sbullet}$ be a finitely generated graded $R_{\sbullet}$-module.
If $\widetilde{M_{\sbullet}}=0$, then $M_n=\{0\}$ for any sufficiently positive integer $n$.
\end{lemm}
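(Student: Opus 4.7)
The plan is to exploit the finite generation of $R_{\sbullet}$ in degree one to reduce the vanishing of $\widetilde{M_{\sbullet}}$ to the annihilation of a finite set of generators by bounded powers of finitely many elements, and then use a pigeonhole argument in the degrees of monomials.

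First, I would fix a finite generating family $a_1,\dots,a_r\in R_1$ of $R_{\sbullet}$ as an $R_0$-algebra, so that the affine opens $D_{P+}(a_i)$ cover $P=\mathrm{Proj}(R_{\sbullet})$. The restriction of $\widetilde{M_{\sbullet}}$ to $D_{P+}(a_i)$ is the quasi-coherent sheaf associated with the degree-zero part of $M_{\sbullet}[a_i^{-1}]$, and the hypothesis $\widetilde{M_{\sbullet}}=0$ applies equally to every twist $\widetilde{M(d)_{\sbullet}}$ (which is $\widetilde{M_{\sbullet}}\otimes\mathcal O_P(d)$ and hence also zero). Consequently, for any homogeneous element $m\in M_d$, its image $m/1\in M_{\sbullet}[a_i^{-1}]$ vanishes, so there exists an integer $N_i(m)\geqslant 0$ with $a_i^{N_i(m)}m=0$.

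Next, since $M_{\sbullet}$ is a finitely generated graded $R_{\sbullet}$-module, I choose a finite family of homogeneous generators $m_1,\dots,m_s$ of respective degrees $e_1,\dots,e_s$. Applying the previous step to each pair $(a_i,m_j)$ and taking the maximum, I obtain an integer $N\in\mathbb N$ such that $a_i^N m_j=0$ for all $i\in\{1,\dots,r\}$ and $j\in\{1,\dots,s\}$.

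Finally, I would show that $M_n=\{0\}$ as soon as $n\geqslant r(N-1)+1+\max_{j}e_j$. Indeed, any element of $M_n$ is of the form $\sum_{j=1}^s r_j m_j$ with $r_j\in R_{n-e_j}$, and each $r_j$ can be expanded as an $R_0$-linear combination of monomials $a_1^{d_1}\cdots a_r^{d_r}$ with $d_1+\dots+d_r=n-e_j\geqslant r(N-1)+1$. By pigeonhole, every such monomial has at least one exponent $d_i\geqslant N$, and therefore annihilates $m_j$. Hence $r_j m_j=0$ for every $j$ and the whole sum vanishes.

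The proof is essentially bookkeeping and I do not expect any serious obstacle; the only subtlety is ensuring that the vanishing of $\widetilde{M_{\sbullet}}$ on $D_{P+}(a_i)$ really translates into the existence of an annihilating power of $a_i$ for each homogeneous element, which is handled by passing to the appropriate twist $M(d)_{\sbullet}$ so that $m$ is viewed as a degree-zero section over the affine chart.
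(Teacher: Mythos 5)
Your proof is correct and complete: the reduction of $\widetilde{M_{\sbullet}}=0$ to the vanishing of $(M_{\sbullet}[a_i^{-1}])_0$ over the covering charts, the extraction of a uniform annihilating power $N$ for the finitely many generators, and the pigeonhole bound $n\geqslant r(N-1)+1+\max_j e_j$ all go through, using only the standing hypothesis that $R_{\sbullet}$ is generated over $R_0$ by finitely many elements of degree $1$. The paper gives no proof of its own but cites \cite[Proposition~II.2.7.3]{EGAII}, whose argument is essentially the one you reproduce, so there is nothing to add.
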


\begin{lemm}\label{lem:prelim2} Let $R_\sbullet$ be a graded ring and $P=\mathrm{Proj}(R_\sbullet)$.
If $R_{\sbullet}$ is essentially integral and is generated as an $R_0$-algebra by finitely many homogeneous elements in $R_1$, then the canonical homomorphism $\alpha_{\sbullet}:R_{\sbullet}\to R(\mathcal O_P(1))_{\sbullet}:=\bigoplus_{n\in\mathbb N}H^0(P,\mathcal O_P(n))$ is injective and any element of $R(\mathcal O_P(1))_{\sbullet}$ is integral over $R_\sbullet$.
\end{lemm}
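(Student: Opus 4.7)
The plan is to exploit the affine cover $P=\bigcup_{j=1}^{r}D_+(a_j)$ induced by a finite system of generators $a_1,\dots,a_r\in R_1$, together with the explicit description $H^0(D_+(a_j),\mathcal O_P(n))=(R(n)_\sbullet)_{(a_j)}$ and the fact that essential integrality makes $P$ an integral scheme on which each $\mathcal O_P(n)$ is a torsion-free line bundle. For injectivity, if $s\in R_n$ satisfies $\alpha_n(s)=0$, then $s/1=0$ in $(R(n)_\sbullet)_{(a_j)}$ for every $j$, so there exist $p_j\in\mathbb N$ with $a_j^{p_j}s=0$ in $R_\sbullet$. Since $R_{\geqslant 1}\neq\{0\}$ and $R_\sbullet=R_0[a_1,\dots,a_r]$, at least one $a_j$ is nonzero, and essential integrality makes all its powers nonzero; essential integrality applied to $a_j^{p_j}s$, which has strictly positive degree whenever $n\geqslant 1$, then forces $s=0$.

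For the integrality, fix a homogeneous $t\in H^0(P,\mathcal O_P(n))$. The local description on each $D_+(a_j)$ combined with torsion-freeness of $\mathcal O_P(n+m_j)$ on $P$ (which promotes equality on the dense open $D_+(a_j)$ to global equality) produces $m_j\in\mathbb N$ and $g_j\in R_{n+m_j}$ with the global identity $a_j^{m_j}t=\alpha(g_j)$ in $R(\mathcal O_P(1))_{n+m_j}$. Setting $M:=\max_j m_j$ and $L:=rM$, a pigeonhole argument on monomials $a_1^{b_1}\cdots a_r^{b_r}$ of degree $\geqslant L$ (some $b_j\geqslant m_j$) shows $R_{\geqslant L}\cdot t\subset\alpha(R_\sbullet)$; writing $ft=\alpha(g)$ with $g\in R_{\geqslant L+n}\subset R_{\geqslant L}$ for each $f\in R_{\geqslant L}$, an induction on $k$ extends this to $R_{\geqslant L}\cdot t^k\subset\alpha(R_\sbullet)$ for every $k\geqslant 1$.

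To conclude, take the nonzerodivisor $a:=a_1^L$ (nonzero by essential integrality and $R_{\geqslant 1}\neq\{0\}$): for each $k\geqslant 0$ there is a unique $r_k\in R_{L+kn}$ with $\alpha(r_k)=a\cdot t^k$, computed inductively from the pigeonhole estimate (so that $\alpha(r_{k+1})=r_k\cdot t$). All the $r_k$ lie in the sub-$S$-algebra $R_\sbullet'\subset R_\sbullet$ generated by $a_1,\dots,a_r$, where $S\subset R_0$ is the Noetherian sub-$\mathbb Z$-algebra generated by the finitely many $R_0$-coefficients appearing in the $g_j$'s. Since $R_\sbullet'$ is then Noetherian, the ideal of $R_\sbullet'$ generated by $\{r_k\}_{k\geqslant 0}$ is finitely generated, yielding a relation $r_d=\sum_{i<d}c_ir_i$ with homogeneous $c_i\in R_{(d-i)n}$. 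Translating back gives $\alpha(a)\cdot(t^d-\sum_{i<d}c_it^i)=0$ in $R(\mathcal O_P(1))_\sbullet$, and since $P$ is integral the ring $R(\mathcal O_P(1))_\sbullet$ is essentially integral with $\alpha(a)$ a nonzerodivisor on elements of positive degree (and even on degree-zero sections, by the generic-point argument on the integral scheme $P$), so cancelling $\alpha(a)$ produces the required monic relation $t^d=\sum_{i<d}c_it^i$ over $R_\sbullet$. The main obstacle is carrying out this Noetherian-reduction step cleanly in the absence of a Noetherian hypothesis on $R_0$; all other steps are routine bookkeeping around the pigeonhole estimate.
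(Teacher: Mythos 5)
Your argument is correct and its first two thirds (the affine cover by the $D_{+}(a_j)$, the local description of sections, clearing denominators, and the pigeonhole estimate giving $R_{\geqslant L}\,t^k\subset\alpha(R_\sbullet)$ for all $k$) coincide with the paper's proof. You diverge at the very last step, and this is where your route is genuinely different --- and needlessly harder. Having shown $R_{\geqslant L}\,t^k\subset R_\sbullet$ for every $k$, the paper simply observes that $R_\sbullet[t]\subset a_1^{-L}R_\sbullet$, a cyclic (hence finitely generated) and faithful $R_\sbullet$-module, and invokes the determinant-trick characterisation of integral elements. That criterion requires no Noetherian hypothesis whatsoever, so the ``main obstacle'' you identify --- the absence of a Noetherian assumption on $R_0$ --- is not an obstacle at all, and your workaround (descending to the finitely generated $\mathbb Z$-subalgebra $S$ generated by the coefficients of the $g_j$, invoking Hilbert's basis theorem, and extracting a relation $r_d=\sum_{i<d}c_ir_i$ from the ascending chain of ideals $(r_0,\dots,r_k)$, followed by cancellation of $\alpha(a)$ in the essentially integral ring $R(\mathcal O_P(1))_\sbullet$) is correct but reproves a special case of Cayley--Hamilton by hand. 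What your version buys is nothing beyond the paper's; what it costs is the extra bookkeeping of checking that all $r_k$ stay in $S[a_1,\dots,a_r]$ and that $\alpha(a)$ is cancellable. One shared blemish: your injectivity argument (like the paper's identification of sections with elements of $\bigcap_i R_\sbullet[1/a_i]$) only treats homogeneous elements of positive degree; in degree $0$ essential integrality does not by itself force $a_j^{p}s\neq 0$ for $0\neq s\in R_0$, but this is harmless in every application in the paper, where $R_0=k$ is a field.
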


\begin{proof}
Suppose that $R_{\sbullet}$ is generated as an $R_0$-algebra by $\{a_1,\dots,a_r\}\subset R_1\setminus\{0\}$, where $a_1,\dots,a_r$ are all non zerodivisors in $R_{\geqslant 1}$ since $R_{\sbullet}$ is essentially integral (see Notation and conventions {\bf\ref{essentially integral}}).
Given any $\mathfrak{p}\in P$, one can find an $a_i$ such that $a_i\notin\mathfrak{p}$; hence $(D_{P+}(a_i))_{i\in\{1,\ldots,r\}}$ covers $P$.
Thus, a section in $R(\mathcal O_P(1))_{\sbullet}$ can naturally be identified with an element in
\begin{equation}\label{eqn:prelim2:1}
\bigcap_{i=1}^rR_{\sbullet}[1/a_i],
\end{equation}
where the intersection is taken in $R_{\sbullet}[1/(a_1\dots a_r)]$.
In particular, $\alpha_{\sbullet}$ is injective.

Given any homogeneous element $u\in R(\mathcal O_P(1))_{\sbullet}$, one can find an $e\geqslant 1$ such that $a_i^eu\in R_{\sbullet}$ for every $i$ by \eqref{eqn:prelim2:1}.
Since $a_1,\dots,a_r$ generates $R_{\geqslant 1}$, one obtains $R_{\geqslant re}u\subset R_{\geqslant re}$.
Moreover, by induction,
\[
R_{\geqslant re}u^n\subset R_{\geqslant re}u^{n-1}\subset\dots\subset R_{\geqslant re}u\subset R_{\geqslant re}
\]
for every $n\geqslant 1$.
It implies that $R_{\sbullet}[u]\subset (1/a_1)^{re}R_{\sbullet}$; hence $u$ is integral over $R_{\sbullet}$ (see for example \cite[Theorem~9.1]{Matsumura}).
\end{proof}

\begin{lemm}\label{lem:prelim3} We keep the notation of Lemma \ref{lem:prelim2}.
Suppose that $R_{\sbullet}$ is a Noetherian integral domain and is generated as an $R_0$-algebra by finitely many homogeneous elements in $R_1$.
\begin{enumerate}[(1)]
\item If $R_{\sbullet}$ is an $N$-1 ring, then there exists an $n_0\geqslant 0$ such that $\alpha_n$ is isomorphic for every $n\geqslant n_0$.
\item If $R_{\sbullet}$ is an integrally closed domain, then $\alpha_n$ is isomorphic for every $n\geqslant 0$.
\end{enumerate}
\end{lemm}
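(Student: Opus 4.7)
The plan is to leverage Lemma \ref{lem:prelim2} as the starting point: it yields that $\alpha_\sbullet$ is an injective graded homomorphism, that $R(\mathcal O_P(1))_\sbullet$ embeds as a graded subring of the localisation $R_\sbullet[1/(a_1\cdots a_r)]$ inside $\mathrm{Frac}(R_\sbullet)$ (via the identification \eqref{eqn:prelim2:1}), and that every element of $R(\mathcal O_P(1))_\sbullet$ is integral over $R_\sbullet$. For part~(2), I would argue that, since $R_\sbullet$ is integrally closed, every element of $\mathrm{Frac}(R_\sbullet)$ that is integral over $R_\sbullet$ must lie in $R_\sbullet$ itself; applied to a section $s \in H^0(P,\mathcal O_P(n))$, viewed as a homogeneous element of degree $n$ in $R_\sbullet[1/(a_1\cdots a_r)]$, this forces $s \in R_n$. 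Combined with the injectivity of $\alpha_n$, this shows that $\alpha_n$ is an isomorphism for every $n \geqslant 0$.

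For part~(1), I would exploit the $N$-1 condition to get that the integral closure $R_\sbullet'$ of $R_\sbullet$ in $\mathrm{Frac}(R_\sbullet)$ is a finite $R_\sbullet$-module. The integrality assertion of Lemma \ref{lem:prelim2} then gives $R(\mathcal O_P(1))_\sbullet \subset R_\sbullet'$, and combined with the Noetherian hypothesis on $R_\sbullet$ this shows that $R(\mathcal O_P(1))_\sbullet$ is itself a finitely generated $R_\sbullet$-module. In particular, the graded cokernel $Q_\sbullet := R(\mathcal O_P(1))_\sbullet / \alpha_\sbullet(R_\sbullet)$ is a finitely generated graded $R_\sbullet$-module. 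The plan is then to verify that the associated quasi-coherent sheaf $\widetilde{Q_\sbullet}$ vanishes on $P$ and to conclude by Lemma \ref{lem:prelim1} that $Q_n = \{0\}$ for every sufficiently large $n$, which is exactly the statement that $\alpha_n$ is an isomorphism for every $n \geqslant n_0$ with some $n_0 \geqslant 0$.

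The main obstacle is thus the sheaf vanishing $\widetilde{Q_\sbullet} = 0$. I would establish this by showing that the sheafified morphism $\widetilde{\alpha_\sbullet} : \widetilde{R_\sbullet} \to \widetilde{R(\mathcal O_P(1))_\sbullet}$ is an isomorphism. Since $R_\sbullet$ is generated over $R_0$ in degree $1$, the sheaf $\mathcal O_P(1)$ is an invertible sheaf on the Noetherian scheme $P$, and the natural counit $\widetilde{R(\mathcal O_P(1))_\sbullet} \to \mathcal O_P$ of the sheafification--sections adjunction is then an isomorphism (the $\mathrm{Proj}$-analogue of Serre's theorem, cf.\ \cite[\S II.2.7]{EGAII}). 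Under the canonical identification $\widetilde{R_\sbullet} = \mathcal O_P$, composing this counit with $\widetilde{\alpha_\sbullet}$ yields the identity, hence $\widetilde{\alpha_\sbullet}$ itself is an isomorphism and $\widetilde{Q_\sbullet} = 0$. The careful verification of this sheaf-level compatibility is the only non-formal ingredient beyond Lemma \ref{lem:prelim2}.
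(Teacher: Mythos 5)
Your proposal is correct and follows essentially the same route as the paper: part (2) via the integral closedness of $R_\sbullet$ applied to the homogeneous elements of $R(\mathcal O_P(1))_\sbullet$ inside $\mathrm{Frac}(R_\sbullet)$, and part (1) by using the $N$-1 and Noetherian hypotheses to make the cokernel of $\alpha_\sbullet$ a finitely generated graded module, invoking \cite[Proposition~II.2.7.11]{EGAII} for the vanishing of its associated sheaf, and concluding with Lemma \ref{lem:prelim1}.
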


\begin{proof}
(1) Recall that an integral domain is called an $N$-1 ring if its integral closure in its fraction field is a finite generated module over itself. Note that the graded rings $R_\sbullet$ and $R_\sbullet':=R(\mathcal O_P(1))_{\sbullet}$ have the same homogeneous fraction field, which is the field of rational functions of the scheme $\mathrm{Proj}(R_\sbullet)$. In particular, any homogeneous element of $R_\sbullet'$ belongs to the homogeneous fraction field of $R_\sbullet$, which is contained in the fraction field of $R_\sbullet$. By Lemma \ref{lem:prelim2} we obtain that $R_\sbullet'$ is contained in the integral closure of $R_\sbullet$ and hence is a module of finite type over $R_\sbullet$ by the Noetherian and $N$-1 hypotheses.

We consider the exact sequence of $\mathcal O_{\mathrm{Proj}(R_{\sbullet})}$-modules:
\[\xymatrix{
0\ar[r]&\widetilde{\mathrm{Ker}(\alpha_{\sbullet})}\ar[r]&\widetilde{R_{\sbullet}}\ar[r]^-{\widetilde{\alpha_{\sbullet}}}&\widetilde{R_{\sbullet}'}\ar[r]&\widetilde{\mathrm{Coker}(\alpha_{\sbullet})}\ar[r]& 0.}
\]
Since $\widetilde{\alpha_{\sbullet}}$ is isomorphic by \cite[Proposition~II.2.7.11]{EGAII}, we have $\widetilde{\mathrm{Ker}(\alpha_{\sbullet})}=\widetilde{\mathrm{Coker}(\alpha_{\sbullet})}=0$.
Hence, by Lemma \ref{lem:prelim1}, we conclude.

(2) If $R_\sbullet$ is integrally closed, the above argument actually leads to $R_\sbullet=R_\sbullet'$ since $R_\sbullet'$ is contained in the integral closure of $R_\sbullet$.
\end{proof}

\subsection{Proof of Theorem \ref{Thm:Zariski projective}}

Let $X$ and $X'$ be integral normal $k$-schemes with a fixed inclusion $\mathrm{Rat}(X')\subset\mathrm{Rat}(X)$.
Each point $\xi\in X^{(1)}\cup X^{(0)}$ (respectively, $\xi'\in {X'}^{(1)}\cup{X'}^{(0)}$) defines the discrete valuation $\ord_{\xi}$ (respectively, $\ord_{\xi'}$) of $\mathrm{Rat}(X)$ (respectively, of $\mathrm{Rat}(X')$) over $k$.
We define two sets of points on $X$ and on $X'$, respectively, as
\begin{equation}
\mathfrak A_{X/X'}:=\left\{\xi\in X^{(1)}\,:\,\begin{array}{l}\text{$\ord_{\xi}|_{\mathrm{Rat}(X')}$ is not equivalent to any} \\ \text{of $\ord_{\xi'}$ for $\xi'\in {X'}^{(1)}\cup {X'}^{(0)}$}\end{array}\right\}
\end{equation}
and
\begin{equation}
\mathfrak B_{X/X'}:=\left\{\xi'\in {X'}^{(1)}\,:\,\begin{array}{l}\text{$\ord_{\xi'}$ is not equivalent to any} \\ \text{of $\ord_{\xi}|_{\mathrm{Rat}(X')}$ for $\xi\in X^{(1)}$}\end{array}\right\}.
\end{equation}

\begin{lemm}\label{lem: finiteness of the error points}
Let $X$ and $X'$ be integral normal $k$-schemes of finite type with a fixed inclusion $\mathrm{Rat}(X')\subset\mathrm{Rat}(X)$.
\begin{enumerate}
\item The sets $\mathfrak A_{X/X'}$ and $\mathfrak B_{X/X'}$ are both finite.
\item If the inclusion $\mathrm{Rat}(X')\subset\mathrm{Rat}(X)$ is induced from a surjective and flat morphism $\pi:X\to X'$, then both $\mathfrak A_{X/X'}$ and $\mathfrak B_{X/X'}$ are empty.
\item If $X'$ is proper over $k$ and the inclusion $\mathrm{Rat}(X')\subset\mathrm{Rat}(X)$ is induced from a proper birational morphism $\pi:X\to X'$, then $\mathfrak B_{X/X'}=\emptyset$ and $\mathfrak A_{X/X'}$ is the set of the exceptional divisors of $\pi$.
\end{enumerate}
\end{lemm}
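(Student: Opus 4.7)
The plan treats all three parts via one key observation: because $X'$ is normal, for any $\xi'\in X'^{(1)}$ the local ring $\mathcal O_{X',\xi'}$ is a discrete valuation ring, and by the maximality of valuation rings under domination, any nontrivial discrete valuation of $\mathrm{Rat}(X')$ whose valuation ring contains and dominates $\mathcal O_{X',\xi'}$ must coincide with $\mathcal O_{X',\xi'}$ itself, hence is equivalent to $\ord_{\xi'}$. A trivial restriction corresponds to centre at the generic point of $X'$. Combined with Lemma \ref{lem: centre of the restriction}, this identifies $\mathfrak A_{X/X'}$ with the set of $\xi\in X^{(1)}$ such that the centre of $\ord_\xi|_{\mathrm{Rat}(X')}$ in $X'$ has codimension at least two.

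For part (2), the flat dimension formula gives $1 = \dim\mathcal O_{X,\xi} \geqslant \dim\mathcal O_{X',\pi(\xi)}$ for any $\xi\in X^{(1)}$, so $\pi(\xi)\in X'^{(0)}\cup X'^{(1)}$ and $\xi\notin\mathfrak A_{X/X'}$ by the observation above. Conversely, for $\xi'\in X'^{(1)}$, any generic point $\xi$ of an irreducible component of the fibre $\pi^{-1}(\xi')$ (nonempty by surjectivity) also satisfies $\dim\mathcal O_{X,\xi} = 1$ by the same dimension formula, so $\xi\in X^{(1)}$ with $\pi(\xi) = \xi'$, and then the discrete valuation ring argument yields $\ord_\xi|_{\mathrm{Rat}(X')}\sim\ord_{\xi'}$, giving $\xi'\notin\mathfrak B_{X/X'}$.

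For part (3), the valuation-theoretic input shows that $\pi$ is an isomorphism above an open $U'\subset X'$ with $\mathrm{codim}_{X'}(X'\setminus U')\geqslant 2$: for any $\xi'\in X'^{(1)}$ properness provides some $\xi\in\pi^{-1}(\xi')$ whose local ring dominates the discrete valuation ring $\mathcal O_{X',\xi'}$, forcing $\mathcal O_{X,\xi} = \mathcal O_{X',\xi'}$ and hence a local isomorphism. This immediately gives $\mathfrak B_{X/X'} = \emptyset$, while $\mathfrak A_{X/X'}$ consists precisely of the codimension-one points of the closed subset $\pi^{-1}(X'\setminus U')\subset X$; since $X$ is of finite type this subset is Noetherian, so only finitely many of its irreducible components can be divisors, which matches the description as the set of exceptional divisors of $\pi$.

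For part (1), where only an inclusion of function fields is given, I first choose a dense open $V\subset X$ on which the inclusion is induced by an honest morphism $\pi:V\to X'$; the complement $X\setminus V$ then contributes only finitely many codimension-one points of $X$. By generic flatness, there exists a dense open $V'\subset X'$ such that $\pi^{-1}(V')\to V'$ is flat, and after further restricting $V'$ to the (open) image of this flat morphism we may also assume it is surjective. Setting $U:=\pi^{-1}(V')$, the argument of part (2) applied to $U\to V'$ shows that codimension-one points of $U$ do not contribute to $\mathfrak A_{X/X'}$, so $\mathfrak A_{X/X'}$ is contained in the finite set of codimension-one points of $(X\setminus V)\cup(V\setminus U)$; symmetrically, the same argument yields $\mathfrak B_{X/X'}\subset(X'\setminus V')^{(1)}$, which is also finite. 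The main subtleties are the discrete-valuation-ring domination argument underlying the identification of $\mathfrak A$ with ``valuations having centre of codimension at least two'', and cleanly reducing the general case (1) to the flat case of (2) via generic flatness.
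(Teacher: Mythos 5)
Your proposal is correct and follows essentially the same route as the paper: part (2) via the flat dimension formula and the centre lemma, part (1) by reducing to part (2) through generic flatness after shrinking to opens where the field inclusion is realised by a surjective flat morphism, and part (3) via the valuative criterion producing an open $U'\subset X'$ with complement of codimension $\geqslant 2$ over which $\pi$ is an isomorphism. The only difference is cosmetic: you spell out the DVR-domination/maximality argument showing that a discrete valuation centred at a point of codimension $\leqslant 1$ of a normal scheme is equivalent to the corresponding $\ord_{\xi'}$, a step the paper leaves implicit in its appeal to Lemma \ref{lem: centre of the restriction}.
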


\begin{proof}
2: Let $\xi\in X^{(1)}$. Then by \cite[Proposition IV.6.1.1]{EGAIV_2} we have
\[
\dim\mathcal O_{X',\pi(\xi)}=\dim\mathcal O_{X,\xi}-\dim\mathcal O_{\pi^{-1}(\pi(\xi)),\xi}=\text{0 or 1}.
\]
Hence $\pi(\xi)\in {X'}^{(1)}\cup {X'}^{(0)}$ and $\ord_{\xi}|_{\mathrm{Rat}(X')}$ is equivalent to $\ord_{\pi(\xi)}$ by Lemma \ref{lem: centre of the restriction}.

Let $\xi'\in {X'}^{(1)}$.
Given any irreducible component $Z$ of $\pi^{-1}(\overline{\{\xi'\}})$, the generic point $\xi$ of $Z$ is mapped to $\xi'$ via $\pi$ (see \cite[Proposition IV.2.3.4]{EGAIV_2}).
Hence $\ord_{\xi'}$ is equivalent to $\ord_{\xi}|_{K'}$.

1: The inclusion $\mathrm{Rat}(X')\subset \mathrm{Rat}(X)$ yields a $k$-morphism $\pi:U\to X'$, where $U$ denotes a nonempty open subscheme of $X$.
By the theorem of generic flatness \cite[Th\'eor\`eme IV.6.9.1]{EGAIV_2}, there exists a nonempty open subscheme $U'\subset X'$ such that
\[
\bar{\pi}:=\pi|_{\pi^{-1}(U')}:\bar{U}:=\pi^{-1}(U')\to U'
\]
is flat.
Moreover, since $\bar{\pi}$ is an open morphism (see \cite[Th\'eor\`eme IV.2.4.6]{EGAIV_2}), we may assume that $\bar{\pi}$ is surjective.
By the assertion 1 above, $\mathfrak A_{X/X'}$ (respectively, $\mathfrak B_{X/X'}$) is contained in the set consisting of the generic points of the irreducible components of $X\setminus\pi^{-1}(U')$ (respectively, $X'\setminus U'$).

3: By the valuative criterion of properness, there exists an open subscheme $U'\subset X'$ such that $\mathrm{codim}(X'\setminus U',X')\geqslant 2$ and the identification $\mathrm{Rat}(X')=\mathrm{Rat}(X)$ induces an open immersion $U'\to X$.
Hence $\mathfrak B_{X/X'}=\emptyset$ and $\mathfrak A_{X/X'}$ is contained in the exceptional locus of $\pi$.
If $\xi$ is a generic point of an irreducible component of the exceptional locus of $\pi$, then $\pi(\xi)=c_{X'}(\ord_{\xi}|_{\mathrm{Rat}(X')})$ by Lemma \ref{lem: centre of the restriction} and $\dim\mathcal O_{X',\pi(\xi)}$ is $\geqslant 2$.
Hence $\xi\in\mathfrak A_{X/X'}$.
\end{proof}

We restate Theorem \ref{Thm:Zariski projective} as follows.

\begin{theo}
Let $K/K'/k$ be field extensions of finite type and $W_{\sbullet}$ a graded linear series of $K/k$ that is generated over $k$ by the homogeneous elements of degree $1$.
We assume that $W_1$ contains $1\in K$ and that the projective spectrum $P:=\mathrm{Proj}(W_{\sbullet})$ is a normal scheme.
Let $X$ be any integral normal projective $k$-scheme whose field of rational functions is $k$-isomorphic to $k(W_{\sbullet}\cap K'[T])$.
\begin{enumerate}
\item There then exists a $\mathbb Q$-Weil divisor $D$ on $X$ such that
\[
W_n\cap K'\subset H^0(X,nD)\subset k(W_{\sbullet}\cap K'[T])
\]
for every sufficiently positive $n$.
\item If $\mathfrak A_{P/X}=\emptyset$, then there exists a $\mathbb Q$-Weil divisor $D$ on $X$ such that
\[
W_n\cap K'=H^0(X,nD)\subset k(W_{\sbullet}\cap K'[T])
\]
for every sufficiently positive $n$.
\end{enumerate}
\end{theo}

\begin{proof}
Without loss of generality, we may assume that $K=k(W_\sbullet)$ and $K'=\mathrm{Rat}(X)$. In particular, $K$ naturally identifies with the field of rational functions on $P$.
First, we give a valuation-theoretic interpretation of the required statement.
Let $H$ be the effective Cartier divisor on $P$ defined by the image of $1$ via $W_1\to H^0(P,\mathcal O_P(1))$.
By Lemma \ref{lem:prelim3}(1), one has
\begin{align}
W_n &=\left\{\phi\in K\,:\,nH+(\phi)\geqslant 0\right\} \\
&=\left\{\phi\in K\,:\,\text{$\ord_{\xi}(\phi)\geqslant -n\mult_{\xi}(H)$, $\forall$ $\xi\in P^{(1)}$}\right\} \nonumber
\end{align}
for every $n\gg 0$.
Therefore,
\begin{equation}\label{eqn:valuation:theoretic:interpretation}
W_n\cap K'=\left\{\phi\in K'\,:\,\text{$\ord_{\xi}|_{K'}(\phi)\geqslant -n\mult_{\xi}(H)$, $\forall$ $\xi\in P^{(1)}$}\right\}
\end{equation}
for $n\gg 0$.

Next, for each $\xi'\in X^{(1)}$, we define a nonnegative rational number $a_{\xi'}$ as follows.
If $\xi'\notin\mathfrak B_{P/X}$, then we fix an arbitrary point $\xi\in P^{(1)}$ such that $\ord_{\xi}|_{\mathrm{Rat}(X)}$ is equivalent to $\ord_{\xi'}$.
Let $e_\xi$ denote the ramification index of $\ord_{\xi}$ with respect to $K/K'$ (see Notation and conventions \ref{NC:valuations}).
We then set
\[
a_{\xi'}:=e_{\xi}^{-1}\mult_{\xi}(H).
\]
Otherwise, we fix an arbitrary discrete valuation $\nu_{\xi'}$ of $K$ extending $\ord_{\xi'}$, whose existence is assured by Lemma \ref{lem: extension of valuation}, and set
\[
a_{\xi'}:=-\min\{0,\nu_{\xi'}(f_1),\dots,\nu_{\xi'}(f_r)\},
\]
where $\{f_1T,\dots,f_rT\}$ denotes a system of generators of $W_{\sbullet}$ as a $k$-algebra. We define $D:=\sum_{\xi'\in X^{(1)}}a_{\xi'}\overline{\{\xi'\}}$. By the finiteness of $\mathfrak B_{P/X}$ proved in Lemma \ref{lem: finiteness of the error points}, $D$ is well-defined as a $\mathbb Q$-Weil divisor on $X$. Moreover, $D$ is effective and we have $W_n\cap K'\subset H^0(X,nD)$ for every $n\gg 0$ by \eqref{eqn:valuation:theoretic:interpretation} and Lemma \ref{lem: general estimation}.

Lastly, we consider the case where $\mathfrak A_{P/X}=\emptyset$.
Given a $\xi'\in X^{(1)}$, we define a nonnegative rational number $b_{\xi'}$ as follows.
If $\xi'\notin\mathfrak B_{P/X}$, then we set
\[
b_{\xi'}:=\min\left\{e_{\xi}^{-1}\mult_{\xi}(H)\,:\,\begin{array}{l}\text{$\xi\in P^{(1)}$, $e_{\xi}\neq 0$, and $\ord_{\xi}|_{\mathrm{Rat}(X)}$ is} \\ \text{equivalent to $\ord_{\xi'}$}\end{array}\right\}.
\]
Otherwise, we fix a discrete valuation $\nu_{\xi'}$ extending $\ord_{\xi'}$, and set
\[
b_{\xi'}:=-\min\{0,\nu_{\xi'}(f_1),\dots,\nu_{\xi'}(f_r)\}
\]
in the same way as above. If we set $D':=\sum_{\xi'\in X^{(1)}}b_{\xi'}\overline{\{\xi'\}}$, then, since $\mathfrak A_{P/X}=\emptyset$,
\begin{align*}
W_n\cap K'&=\left\{\phi\in\mathrm{Rat}(X)\,:\,\text{$\ord_{\xi'}(\phi)\geqslant -nb_{\xi'}$, $\forall$ $\xi'\in X^{(1)}\setminus\mathfrak B_{P/X}$}\right\} \\
&\supset H^0(X,nD')
\end{align*}
for every $n\gg 0$.
The reverse inclusion follows from the same argument as above.

\end{proof}

In the following, we give an alternative proof for Theorem \ref{Thm:subfinite graded} by using the projective version of Zariski's result (Theorem \ref{Thm:Zariski projective}).

\begin{coro}\label{thm:chens:question}
Let $K/k$ be a finitely generated field extension and $K'/k$ a subextension of $K/k$.
Let $V_{\sbullet}$ be a graded linear series of $K'/k$.
If $V_{\sbullet}$ is contained in a graded linear series $W_{\sbullet}$ of $K/k$ and of finite type over $k$, then $V_{\sbullet}$ is contained a graded linear series $W_{\sbullet}'$ of $K'/k$ and of finite type over $k$.
\end{coro}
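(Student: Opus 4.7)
The strategy is to invoke Theorem~\ref{Thm:Zariski projective} after first modifying \(W_\sbullet\) to satisfy its hypotheses, then to dominate the resulting \(\mathbb Q\)-Weil divisor \(D\) by an ample Cartier divisor \(A\) whose section ring is a finitely generated graded linear series, and finally to use this section ring to construct the required \(W_\sbullet'\).

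In a first step, one reduces \(W_\sbullet\) to the hypotheses of Theorem~\ref{Thm:Zariski projective}. Passing to a sufficiently high Veronese subring of \(W_\sbullet\) (with grading rescaled) makes it generated as a \(k\)-algebra by its degree-\(1\) component; twisting by a nonzero element of \(W_1\) as in Remark~\ref{Rem:twIST} and, if necessary, adjoining \(1\cdot T\), arranges that \(1\in W_1\); and replacing \(W_\sbullet\) by its integral closure in its homogeneous fraction field, which stays of finite type because a finitely generated integral \(k\)-algebra is \(N\)-\(2\), renders \(P:=\mathrm{Proj}(W_\sbullet)\) integral and normal. Each of these operations enlarges \(W_\sbullet\), and therefore preserves the containment of \(V_\sbullet\) (up to the corresponding Veronese regrading).

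Next, I choose any integral normal projective \(k\)-scheme \(X\) with \(\mathrm{Rat}(X)\) \(k\)-isomorphic to \(k(W_\sbullet\cap K'[T])\); such \(X\) exists because this field is of finite type over \(k\). Theorem~\ref{Thm:Zariski projective}(1) then produces a \(\mathbb Q\)-Weil divisor \(D\) on \(X\) such that \(W_n\cap K'\subset H^0(X,nD)\) for every sufficiently positive \(n\). The crucial step is to find an ample Cartier divisor \(A\) on \(X\) with \(D\leqslant A\) as \(\mathbb Q\)-Weil divisors: fixing a closed immersion \(X\hookrightarrow\mathbb P^N_k\), for each of the finitely many prime Weil divisors \(V\in X^{(1)}\) appearing with positive multiplicity in \(D\), a hypersurface in \(\mathbb P^N\) containing \(V\) but not \(X\) restricts to an effective Cartier divisor on \(X\) whose support contains \(V\); summing these, scaling to dominate the multiplicities in \(D\), and adding an ample Cartier divisor on \(X\) gives the required \(A\). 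Since \(A\) is ample Cartier on a projective scheme, the section ring \(R(A)_\sbullet=\bigoplus_{n\geqslant 0}H^0(X,nA)T^n\) is a graded linear series of \(\mathrm{Rat}(X)/k\subset K'/k\) of finite type, and the inclusion \(H^0(X,nD)\subset H^0(X,nA)\) forces \(V_n\subset R(A)_n\) for all sufficiently positive \(n\). Adjoining \(k\)-bases of the finitely many low-degree pieces \(V_0,V_1,\dots,V_{n_0}\) to \(R(A)_\sbullet\) then produces a finitely generated graded linear series \(W_\sbullet'\) of \(K'/k\) containing \(V_\sbullet\).

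The principal technical obstacle is to transfer the bound on the Veronese \(V_\sbullet^{(d)}\) back to the original grading: Theorem~\ref{Thm:Zariski projective} applied after the Veronese of Step~1 only controls the subspaces \(V_{nd}\), not \(V_m\) for \(m\) in a nonzero residue class modulo \(d\). One resolves this by running the ample-domination construction on each shifted Veronese \(\bigoplus_{n\geqslant 0}V_{nd+r}T^{nd+r}\) for \(r\in\{0,1,\dots,d-1\}\)---each contained in the correspondingly shifted Veronese of \(W_\sbullet\), which is a finitely generated module over \(W_\sbullet^{(d)}\)---and by enlarging \(A\) so that the resulting section ring \(R(A)_\sbullet\), viewed with its natural \(\mathbb N\)-grading on \(K'[T]\), absorbs every residue class modulo \(d\) while remaining finitely generated.
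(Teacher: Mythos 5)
Your overall strategy is the one the paper follows: put $W_\sbullet$ into the hypotheses of Theorem \ref{Thm:Zariski projective}, apply it to obtain an effective $\mathbb Q$-Weil divisor $D$ on a normal projective model $X$ of $k(W_{\sbullet}\cap K'[T])$, dominate $D$ by a very ample divisor whose section ring is finitely generated, and adjoin the finitely many low-degree pieces of $V_\sbullet$. The gaps are all in the reduction step. First, the twist: you twist $W_\sbullet$ by the inverse of a nonzero $f\in W_1$, but $f$ need not lie in $K'$, and the twist replaces $V_n$ by $f^{-n}V_n$, which then need not be contained in $K'$. After that, Theorem \ref{Thm:Zariski projective} -- whose conclusion only controls $W_n\cap K'$ -- says nothing about $V_n$. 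The paper first arranges, as in Step 1 of the proof of Theorem \ref{Thm: graded case}, that $V_1\neq\{0\}$, and twists by an element of $V_1\subset K'$, which preserves the $K'$-rationality of both series.

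Second, and more seriously, the Veronese. You correctly identify the residue-class problem, but the proposed repair does not work: the shifted pieces $\bigoplus_{n}V_{nd+r}T^{nd+r}$ are modules over the Veronese subalgebra, not graded linear series, so Theorem \ref{Thm:Zariski projective} does not apply to them; and even with a finite system of module generators $g_jT^{m_j}$ of $\bigoplus_n W_{nd+r}$ over $W_\sbullet^{(d)}$, the $g_j$ need not lie in $K'$, so the resulting decomposition gives no handle on $W_{nd+r}\cap K'$, let alone on $V_{nd+r}$. The paper avoids the Veronese altogether via the Claim in its proof: one adjoins new degree-one variables $T_i$ subject to $T_i^{d_i}=f_iT^{d_i}$ and passes to an integral quotient, which enlarges $K$ but makes $W_\sbullet$ generated in degree one \emph{in the original grading}. (Alternatively, once $1\in V_1$ one has $V_m=V_m\cdot 1\subset V_{m+1}$, so controlling $V_{nd}$ for all $n$ controls every $V_m$; this would rescue your Veronese route, but it is not the argument you give.) A further minor point: the integral closure of $W_\sbullet$ may have degree-zero part a proper finite extension of $k$ and need no longer be generated in degree one; the paper instead normalises $\mathrm{Proj}(W_\sbullet)$ geometrically, passes to a very ample $\widehat H$ with $\widehat H-H$ effective and $R(\widehat H)_\sbullet$ generated in degree one, and uses Lemma \ref{Lem: de type fini corps} for the degree-zero part.
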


\begin{proof}
We divide the proof into three steps.

{\it Step 1:}
In this step, we make several reductions of the theorem.
By the same arguments as in the step 1 of Theorem \ref{Thm: graded case}, we can assume that $V_1$ contains $1$.

\begin{enonce}{Claim}\label{lem:chen}
By enlarging $K$ if necessary, we can assume that $W_{\sbullet}$ is generated by $W_1$ over $k$.
\end{enonce}

\begin{proof}[Proof of Claim \ref{lem:chen}]
Let $f_1T^{d_1},\dots,f_rT^{d_r}\in W_{\geqslant 1}$ be homogeneous generators of $W_{\sbullet}$ over $k$.
Let $T_1,\dots,T_r$ be variables with $\deg T_i=1$ for every $i$.
One can find a homogeneous prime ideal $\mathfrak{p}$ of $W_{\sbullet}[T_1,\dots,T_r]$ such that $\mathfrak{p}$ contains 
\[
I:=(T_1^{d_1}-f_1T^{d_1},\dots,T_r^{d_r}-f_rT^{d_r})
\]
and such that $\mathfrak{p}\cap V_{\sbullet}=\{0\}$.
In fact, let $W_{\sbullet}':=W_{\sbullet}[T_1,\dots,T_r]/I$ and let $a$ be a homogeneous element of degree $\geqslant 1$.
Since the morphism $\Spec\left((W_{\sbullet}')_{(a)}\right)\to\Spec\left((V_{\sbullet})_{(a)}\right)$
is dominant (Lemma \ref{Lem:injective yields dominant}), there exists a homogeneous prime ideal $\mathfrak{p}\in\mathrm{Proj}(W_{\sbullet}')$ such that $\mathfrak{p}\cap V_{\sbullet}=\{0\}$.
We set $U_{\sbullet}:=W_{\sbullet}'/\mathfrak{p}$.
Then $U_{\sbullet}$ is a graded linear series of $k(U_{\sbullet})/k$, $W_{\sbullet}\to U_{\sbullet}$ is injective, and $U_{\sbullet}$ is generated by $U_1=W_1+W_0T_1+\dots+W_0T_r$.
\end{proof}

In particular, we can assume that $P:=\mathrm{Proj}(W_{\sbullet})$ is a projective scheme over $k$ and that $\mathcal O_P(1):=\widetilde{W(1)_{\sbullet}}$ is an invertible sheaf on $P$.

{\it Step 2:}
Let $u:\widehat P\to P$ be a normalisation and $H$ the Cartier divisor defined by the image of $1$ via $V_1\to H^0(\widehat P,u^*\mathcal O_P(1))$.
We choose a very ample divisor $\widehat H$ such that $\widehat H-H$ is effective and such that $R(\widehat H)_{\sbullet}$ is generated by $R(\widehat H)_1T$ over $R(\widehat H)_0$.

Note that the graded $k$-algebra
\[
\widehat W_{\sbullet}:=k\oplus\bigoplus_{n\geqslant 1} H^0(\widehat P,n\widehat H)T^n
\]
is a graded linear series of $K/k$ and of finite type over $k$ (Lemma \ref{Lem: de type fini corps}) and that $\mathrm{Proj}(\widehat W_{\sbullet})$ is isomorphic to $\widehat P$ over $k$.

Applying Theorem \ref{Thm:Zariski projective} to $\widehat W_{\sbullet}$ and $K'/k$, we can find an integral normal projective $k$-scheme $X$, an effective $\mathbb Q$-divisor $D$ on $X$, and an integer $n_0\geqslant 1$ such that $\mathrm{Rat}(X)\subset K'$ and such that $V_n\subset R(\widehat H)_n\cap K'\subset H^0(X,nD)$ for every $n$ with $n\geqslant n_0$.

{\it Step 3:}
Let $\widehat D$ be a very ample divisor on $X$ such that $\widehat D-D$ is effective and such that $R(\widehat D)_{\sbullet}$ is finitely generated over $k$.
Let $W_{\sbullet}'$ be the graded linear series generated by a basis of
\[
\bigoplus_{n<n_0}V_nT^n
\]
over $k$ and by finite number of generators of $R(\widehat D)_{\sbullet}$ over $k$.
Then $W_n'$ contains $V_n$ for every $n\geqslant 0$ and $W_{\sbullet}'$ is finitely generated over $k$.
\end{proof}

As a consequence of Theorem \ref{Thm:Zariski projective}, we can give an estimate of the following type for graded linear series of subfinite type (see also \cite[Corollary 2.1.38]{LazarsfeldI} and Theorem \ref{Theo:Application Fujita approximation} \emph{infra}).

\begin{coro}
Let $K/k$ be a finitely generated field extension and $V_{\sbullet}$ a graded linear series of $K/k$ and of subfinite type.
Let $d$ be the Kodaira-Iitaka dimension of $V_{\sbullet}$.
If $d$ is nonnegative, then there exist an integral normal projective $k$-scheme $X$ and $\mathbb Q$-Cartier divisors $D,D'$ on $X$ such that the rational function field of $X$ is $k$-isomorphic to $k(V_{\sbullet})$, that both $D$ and $D'$ have Kodaira-Iitaka dimension $d$, and that
\[
H^0(X,nD')\subset V_n\subset H^0(X,nD)\subset k(V_{\sbullet})
\]
for every sufficiently positive $n$ with $V_n\neq\{0\}$.
\end{coro}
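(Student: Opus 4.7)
Set $L:=k(V_\sbullet)$. After the grading reductions of Step~1 in the proof of Theorem~\ref{Thm: graded case} (passing to a suitable Veronese and twisting by $f^{-1}$ for some $f\in V_1$), performed compatibly on the three nested series we are about to construct, we may assume $1\in V_1$, so that $V_\sbullet\subset L[T]$. By Theorem~\ref{Thm:subfinite graded} there is a finite-type $W_\sbullet\supset V_\sbullet$ with $k(W_\sbullet)=L$, which by Proposition~\ref{generation higher degree} and a further Veronese may be assumed generated in degree one. By Lemma~\ref{Rem:generation of homogeneous fractions}, choose $n_0$ with $V_{n_0}\neq\{0\}$ and $k(V_{n_0})=L$, then pick finitely many elements of $V_{n_0}$ whose ratios generate $L/k$ and let $V'_\sbullet\subset V_\sbullet$ be the finite-type graded $k$-subalgebra they generate; after a further Veronese, $V'_\sbullet$ is generated in degree one, has $k(V'_\sbullet)=L$, and therefore Kodaira-Iitaka dimension $d$.

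Let $P_W:=\mathrm{Proj}(W_\sbullet)$ and $P_{V'}:=\mathrm{Proj}(V'_\sbullet)$; these are integral projective $k$-schemes of dimension $d$ with function field $L$. By replacing $W_\sbullet$ with the (still finite-type) graded ring of sections of a normalisation of $P_W$, we may assume $P_W$ is normal; by choosing the generators of $V'_\sbullet$ in $V_{n_0}$ sufficiently generically (taking $n_0$ large, so that the chosen elements realise a projective embedding of a normal model of $L/k$), we may also arrange that $P_{V'}$ is normal. Let $X$ be the normalisation of the closure of the diagonal image of $\mathrm{Spec}(L)$ in $P_W\times_k P_{V'}$; then $X$ is an integral normal projective $k$-scheme with $\mathrm{Rat}(X)\cong L$, equipped with proper birational morphisms $\pi_W\colon X\to P_W$ and $\pi_{V'}\colon X\to P_{V'}$. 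Set $D:=\pi_W^*H_W$ and $D':=\pi_{V'}^*H_{V'}$, where $H_W$ and $H_{V'}$ are the ample Cartier divisors corresponding to $\mathcal{O}(1)$ on each Proj; both $D$ and $D'$ are Cartier on $X$.

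The sandwich now follows from Lemma~\ref{lem:prelim3}(1) combined with the projection formula. For $n\gg 0$, Lemma~\ref{lem:prelim3}(1) applied to the $N$-$1$ graded ring $W_\sbullet$ gives $W_n=H^0(P_W,nH_W)$; since $\pi_W$ is proper birational onto the normal scheme $P_W$, one has $\pi_{W*}\mathcal{O}_X=\mathcal{O}_{P_W}$, and the projection formula yields $H^0(X,nD)=H^0(P_W,nH_W)=W_n$. Therefore $V_n\subset W_n=H^0(X,nD)$ for $n\gg 0$. The same argument applied to $V'_\sbullet$ yields $H^0(X,nD')=V'_n\subset V_n$ for $n\gg 0$, closing the sandwich. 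Both $D$ and $D'$ have Kodaira-Iitaka dimension $d$, as they are pullbacks of ample Cartier divisors on $d$-dimensional normal projective $k$-schemes and proper birational pullback preserves the $n^d$ growth of global sections. The main technical obstacle is to ensure that $P_{V'}$ is normal while preserving the inclusion $V'_\sbullet\subset V_\sbullet$: this requires a careful choice of the generators inside the possibly much larger space $V_{n_0}$, and in general one invokes that for $n_0$ large enough the linear system associated to $V_{n_0}$ contains a generic subsystem realising a projective embedding of a normal projective model of $L/k$.
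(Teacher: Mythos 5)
Your construction of the upper bound $D$ and of the common model $X$ is sound and close in spirit to the paper's, but the lower bound $D'$ has two genuine gaps. First, you cannot arrange for $P_{V'}=\mathrm{Proj}(V'_{\sbullet})$ to be normal by ``choosing the generators in $V_{n_0}$ sufficiently generically'': the subalgebra generated by any subspace of $V_{n_0}$ is constrained by $V_{\sbullet}$ itself, and there is no result guaranteeing that some sub-linear-system of $V_{n_0}$ realises a normal projective model of $L/k$ --- indeed $V_{n_0}$ may be exactly the space of sections of a very ample sheaf on a non-normal model, in which case every choice yields a non-normal $\mathrm{Proj}$. The inclusion you need goes the wrong way for normalisation to be harmless: after passing to the normalisation $\nu:\widehat P\to P_{V'}$ one has $H^0(P_{V'},\mathcal O(n))\subset H^0(\widehat P,\nu^*\mathcal O(n))$, i.e.\ sections can only increase, so $H^0(\widehat P, n\nu^*H_{V'})$ need not land in $V_{\sbullet}$. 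The paper's proof resolves this by \emph{shrinking} the divisor on the normalisation: it chooses an ample $A$ on $\widehat P$ with $H^0(\widehat P,nA)=H^0(P_{V'},\nu_*\mathcal O_{\widehat P}(nA))\subset H^0(P_{V'},\mathcal O(mn))$ (citing \cite[D\'emonstration de Proposition 3.6]{Chen15}), which is the step your argument is missing.

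Second, your further Veronese on $V'_{\sbullet}$ re-indexes $V'_{\sbullet}$ but not $V_{\sbullet}$, so the inclusion you actually obtain is $H^0(X,nD')\subset V_{n_0n}$, i.e.\ the sandwich holds only along the arithmetic progression $n_0\mathbb N$, whereas the statement demands it for \emph{every} sufficiently positive $n$ with $V_n\neq\{0\}$ (and after the Step~1 reduction, $1\in V_1$ forces $V_n\neq\{0\}$ for all $n\geqslant 1$). Rescaling $D'$ to $D'/n_0$ does not help, since $H^0(X,nD'/n_0)$ is uncontrolled for $n\not\equiv 0\pmod{n_0}$. The paper closes this gap by running the construction for two coprime periods $q,q'$, producing a single ample $\mathbb Q$-Cartier $D'$ with $H^0(X,qnD')\subset V_{qn}$ and $H^0(X,q'nD')\subset V_{q'n}$, and then using the surjectivity of the multiplication maps $H^0(X,qnD')\otimes_k H^0(X,q'n'D')\to H^0(X,(qn+q'n')D')$ together with the numerical semigroup argument of Lemma \ref{Rem:generation of homogeneous fractions} to cover all large degrees. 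Both of these points need to be added for your argument to establish the statement as claimed.
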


\begin{proof}
The existence of $D$ results from the same arguments as in Corollary \ref{thm:chens:question}.
Thus, it suffices to show the existence of $D'$ having the prescribed properties.
By changing the grading of $V_\sbullet$, we may assume that $\{n\in\mathbb N\,:\,V_n\neq\{0\}\}$ generates $\mathbb Z$ as a $\mathbb Z$-module.
Choose any sufficiently positive integer $p_0$ such that $k(V_{p_0})=k(V_{\sbullet})$ (see Lemma \ref{Rem:generation of homogeneous fractions}).
Let $W_{\sbullet}$ be the sub-$k$-algebra of $V_{\sbullet}$ generated by $V_{p_0}$, and set
\[
W_{\sbullet}':=\bigoplus_{n\in\mathbb N}W_{p_0n}.
\]
Let $P:=\mathrm{Proj}(W_{\sbullet}')$ and $\mathcal O_P(1):=\widetilde{W_{\sbullet}'(1)}$.
By Lemma \ref{lem:prelim3}, $W_n'=H^0(P,\mathcal O_P(n))\subset V_{p_0n}$ for every $n\gg 1$.
Let $\nu:\widehat P\to P$ be a normalisation.
Let $p$ be any sufficiently positive integer divisible by $p_0$.
Then one can find an ample divisor $A$ on $\widehat P$ such that
\[
H^0(\widehat P,nA)=H^0(P,\nu_*(\mathcal O_{\widehat P}(nA)))\subset H^0(P,\mathcal O_P(pn/p_0))\subset V_{pn}
\]
for every positive integer $n$ (see \cite[D\'emonstration de Proposition 3.6]{Chen15}).

Repeating the same arguments, one can choose an integral normal projective $k$-scheme $X$, two big Cartier divisors $A,A'$ on $X$, and two coprime positive integers $p,p'$ such that
\[
H^0(X,nA)\subset V_{pn}\text{ and }H^0(X,nA')\subset V_{p'n}
\]
for any positive integer $n$.
Moreover, one can choose an ample $\mathbb Q$-Cartier divisor $D'$ on $X$ and two coprime positive integers $q,q'$ such that $qq'D'$ is integral, that $q$ (resp. $q'$) is divisible by $p$ (resp. $p'$), and that
\[
H^0(X,qnD')\subset H^0(X,(qn/p)A)\subset V_{qn}
\]
and
\[
H^0(X,q'nD')\subset H^0(X,(q'n/p')A)\subset V_{q'n}
\]
hold for every integer $n\in\mathbb N_{\geqslant 1}$.

Since
\[
H^0(X,qnD')\otimes_kH^0(X,q'n'D')\to H^0(X,(qn+q'n')D')
\]
is surjective for any sufficiently positive integers $n,n'$ (see for example \cite[Example 1.2.22]{LazarsfeldI}, which is valid over fields of arbitrary characteristics), we have $H^0(X,nD')\subset V_n$ for every sufficiently positive $n$ (recall the arguments in Lemma \ref{Rem:generation of homogeneous fractions}).
\end{proof}

\begin{coro}[{Fujita \cite[Appendix]{FujitaL}}]\label{Cor:Fujita theorem}
Let $X$ be an integral normal projective $k$-scheme and  $D$ an effective Cartier divisor on $X$.
If the Kodaira-Iitaka dimension of $D$ is 1, then the section ring $R(D)_{\sbullet}$ is finitely generated.
\end{coro}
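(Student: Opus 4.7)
The plan is to identify $R(D)_\sbullet$ with the section ring of a $\mathbb Q$-divisor on a regular projective curve, and then invoke the classical finite generation result in dimension one. Set $V_\sbullet := R(D)_\sbullet$ and $K' := k(V_\sbullet)$; since $V_\sbullet$ has Kodaira-Iitaka dimension $1$, the field $K'$ is a finitely generated extension of $k$ of transcendence degree $1$. Because $D$ is effective, $1 \in H^0(X,nD) = V_n$ for every $n$, so every $f \in V_n$ equals the ratio $f/1$ and hence lies in $K'$; in particular $V_\sbullet \subset K'[T]$. Let $C$ be the unique integral regular projective $k$-scheme with $\mathrm{Rat}(C) = K'$, which is of dimension one over $k$.

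The next step is to transfer the valuation-theoretic description of $V_n$ from $X$ to the curve $C$. Since $X$ is normal,
\[
V_n = \bigl\{ f \in K' : \mathrm{ord}_\xi(f) + n\,\mathrm{mult}_\xi(D) \geqslant 0 \text{ for every } \xi \in X^{(1)} \bigr\}.
\]
For each $\xi \in X^{(1)}$, the restriction $\mathrm{ord}_\xi|_{K'}$ is a discrete valuation of $K'$ over $k$ which is either trivial (in which case the inequality reduces to $n\,\mathrm{mult}_\xi(D) \geqslant 0$, automatic from $D \geqslant 0$), or, by the standard bijection between nontrivial discrete valuations of $K'/k$ and closed points of $C$, equal to $e_\xi\,\mathrm{ord}_{v(\xi)}$ for a unique closed point $v(\xi) \in C$ with ramification index $e_\xi := e(K',\mathrm{ord}_\xi) \geqslant 1$. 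For each closed point $v \in C$ put
\[
a_v := \max\bigl(\{0\} \cup \{\mathrm{mult}_\xi(D)/e_\xi : \xi \in X^{(1)},\ \mathrm{ord}_\xi|_{K'} \neq 0,\ v(\xi) = v\}\bigr).
\]
Only the finitely many $\xi$ lying in $\mathrm{Supp}(D)$ contribute a nonzero term, so each $a_v$ is the maximum of a finite set (hence a well-defined nonnegative rational), and $a_v = 0$ for all but finitely many $v$. Thus $E := \sum_v a_v\,[v]$ is a well-defined effective $\mathbb Q$-divisor on $C$, and the displayed description of $V_n$ translates, for $f \in K'$, into $\mathrm{ord}_v(f) + n a_v \geqslant 0$ for every closed $v \in C$; that is, $V_n = H^0(C, nE)$ for every $n \geqslant 0$.

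The equality $\mathrm{Rat}(C) = k(V_\sbullet)$ combined with $\mathrm{tr.deg}_k \mathrm{Rat}(C) = 1$ prevents $\dim_k V_n$ from being bounded, which forces $\deg E > 0$. It remains to invoke the classical fact that the section ring of a $\mathbb Q$-divisor of positive degree on a regular projective curve is a finitely generated $k$-algebra: choose $m \geqslant 1$ with $mE$ an integral ample divisor, apply Serre's finite generation to $\bigoplus_n H^0(C, n \cdot mE)$, and then exploit the identity $\lfloor (r + mn)E \rfloor = \lfloor rE \rfloor + mnE$ for $0 \leqslant r < m$ to present $\bigoplus_n H^0(C, nE)$ as a finite direct sum of modules of finite type over its $m$-th Veronese. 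This yields the finite generation of $R(D)_\sbullet$.

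The main obstacle is the valuation transfer itself: one must check carefully that each nontrivial restriction $\mathrm{ord}_\xi|_{K'}$ is accounted for by a unique closed point of $C$ via the ramification formula, and that the assembled divisor $E$ recovers $V_n$ exactly rather than merely a sandwich bound of the kind supplied by Theorem~\ref{Thm:Zariski projective} or by the preceding corollary. Once the equality $V_n = H^0(C, nE)$ is in hand, everything else is a routine application of classical facts on curves.
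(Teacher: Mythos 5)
Your overall strategy --- push $R(D)_\sbullet$ down to a $\mathbb Q$-divisor on the curve $C$ with function field $k(R(D)_\sbullet)$ and invoke finite generation on curves --- is exactly the paper's, but the construction of $E$ contains a sign-of-extremum error that destroys the one equality the whole argument rests on. For a fixed closed point $v\in C$, the conditions coming from the divisors $\xi\in X^{(1)}$ with $v(\xi)=v$ read $\ord_v(f)\geqslant -n\,\mult_\xi(D)/e_\xi$, and their conjunction is equivalent to $\ord_v(f)\geqslant -n\min_\xi \mult_\xi(D)/e_\xi$, not to the condition with the maximum. With your choice $a_v:=\max_\xi \mult_\xi(D)/e_\xi$ you only get $V_n\subset H^0(C,nE)$, i.e.\ precisely the ``sandwich bound'' you say you are avoiding; the reverse inclusion $H^0(C,nE)\subset V_n$ fails whenever two divisors over the same $v$ have different ratios $\mult_\xi(D)/e_\xi$. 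Concretely, let $X$ be the blow-up of $\mathbb P^1\times\mathbb P^1$ at a point of the fibre $F_0$ over $0$, and $D=\widetilde F_0+F_1$ (strict transform of $F_0$ plus another fibre). The fibre over $0$ has two components $\widetilde F_0$ and the exceptional curve, with $\mult(D)$ equal to $1$ and $0$ respectively, so $V_n=H^0(\mathbb P^1,n[1])$ has dimension $n+1$ while your $H^0(C,nE)=H^0(\mathbb P^1,n[0]+n[1])$ has dimension $2n+1$; finite generation of the latter says nothing about $V_\sbullet$. The paper's $E$ is built with the minimum (compare the displayed formula in its proof and the divisor $D'$ in part (2) of the restated Theorem \ref{Thm:Zariski projective}), which gives the exact equality.

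There is a second, smaller gap: you work with the rational map $X\dashrightarrow C$ directly and set $a_v=0$ at every closed point $v$ not dominated by any $\xi\in X^{(1)}$ (the set $\mathfrak B_{X/C}$). Even the inclusion $V_n\subset H^0(C,nE)$ then requires $\ord_v(f)\geqslant 0$ at such $v$ for all $f\in V_n$, which you do not justify; nothing on $X$ constrains $\ord_v(f)$ there. The paper sidesteps this by first replacing $X$ with a blow-up $\widehat X$ on which the map to $C$ becomes a flat surjective morphism, so that $\mathfrak A_{\widehat X/C}=\mathfrak B_{\widehat X/C}=\emptyset$ by Lemma \ref{lem: finiteness of the error points}(2) and every point of $C$ is genuinely accounted for by a divisor upstairs. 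To repair your argument you should both switch the maximum to a minimum and either pass to such a flattening model or prove directly that $\mathfrak B$ causes no harm.
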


\begin{proof}
Let $K:=\mathrm{Rat}(X)$ and let $C$ be the smooth projective $k$-curve with rational function field $k$-isomorphic to $K':=k(R(D)_{\sbullet})$.
The inclusion $K'\subset K$ defines a rational map $X\dashrightarrow C$ and, by taking a suitable blow-up $\mu:\widehat X\to X$, one obtains a flat morphism $\pi:\widehat X\to C$.
If we set
\[
E:=\sum_{\xi'\in C^{(1)}}\min\left\{e_{\xi}^{-1}\mult_{\xi}(\mu^*D)\,:\,\xi\in\widehat X^{(1)},\, \xi\mapsto\xi',\, e_{\xi}\neq 0\right\}\xi',
\]
then $H^0(C,nE)=H^0(\widehat X,n\mu^*D)=H^0(X,nD)$ for every $n\gg 0$.
Hence the result is reduced to the classic case of curves.
\end{proof}

\begin{rema}
\begin{enumerate}
\item If $X$ is a surface, Zariski \cite{Zariski62} completely classified the cases where $R(D)_{\sbullet}$ is finitely generated \cite[Theorem~10.6 and Proposition~11.5]{Zariski62}. Later, Fujita \cite{FujitaL} generalised the case where the Kodaira-Iitaka dimension is one to the form of Theorem \ref{Cor:Fujita theorem} by using the Iitaka fibrations.
\item For a nef and big Cartier divisor $D$ on $X$, $R(D)_{\sbullet}$ is finitely generated if and only if $D$ is semiample (see \cite[Theorem~2.3.15]{LazarsfeldI}).
\end{enumerate}
\end{rema}

\subsection{Nagata's counterexamples}

Let $N$ and $r$ be positive integers such that $N\geqslant r$ and let $X_1,\dots,X_N$ and $Y_1,\dots,Y_N$ denotes variables.
Firstly, we consider the affine case as in \cite{Nagata59, Mukai01,MukaiBook,Mukai04}.
Set
\[
W_{\sbullet}:=\mathbb C[X_1,Y_1,\dots,X_N,Y_N]=\bigoplus_{n\in\mathbb N}\mathbb C[X_1,Y_1,\dots,X_N,Y_N]_n,
\]
where $\mathbb C[X_1,Y_1,\dots,X_N,Y_N]_n$ denotes the $\mathbb C$-vector space of the homogeneous polynomials of degree $n$, and let
\[
K:=\mathrm{Frac}(W_{\sbullet})=\mathbb C(X_1,Y_1,\dots,X_N,Y_N).
\]
We take linearly independent linear forms
\begin{align*}
L_1(T_1,\dots,T_N) &=a_{1,1}T_1+\dots+a_{1,N}T_N,\\
&\vdots \\
L_r(T_1,\dots,T_N) &=a_{r,1}T_1+\dots+a_{r,N}T_N.
\end{align*}
For simplicity, we set $a_{1,1}=\dots=a_{1,N}=1$ and assume
\begin{equation}\label{eqn: nonzero minor}
\det(a_{i,j})_{\substack{1\leqslant i\leqslant r \\ N-r+1 \leqslant j\leqslant N}}\neq 0.
\end{equation}
We set
\begin{align*}
Z_0 &:=X_1\cdots X_N,\\
Z_1 &:=Z_0L_1\left(\frac{Y_1}{X_1},\dots,\frac{Y_N}{X_N}\right), \\
&\vdots \\
Z_r &:=Z_0L_r\left(\frac{Y_1}{X_1},\dots,\frac{Y_N}{X_N}\right),
\end{align*}
and set
\[
K':=\mathbb C(Z_0,Z_1,\dots,Z_r).
\]
Note that one can find linear forms $M_1,\dots,M_r$ such that
\begin{align}
X_N &=\frac{Z_0}{X_1\cdots X_{N-1}}, \label{eqn: the inverse relation}\\
Y_{N-r+1} &=X_{N-r+1}M_1\left(\frac{Y_1}{X_1},\dots,\frac{Y_{N-r}}{X_{N-r}},\frac{Z_1}{Z_0},\dots,\frac{Z_r}{Z_0}\right), \nonumber\\
&\vdots\nonumber\\
Y_N &=\frac{Z_0}{X_1\cdots X_{N-1}}M_r\left(\frac{Y_1}{X_1},\dots,\frac{Y_{N-r}}{X_{N-r}},\frac{Z_1}{Z_0},\dots,\frac{Z_r}{Z_0}\right),\nonumber
\end{align}
and that $K=K'(X_1,\dots,X_{N-1},Y_1,\dots,Y_{N-r})$.
Nagata \cite{Nagata59} has proved that, if $N=13$, $r=3$, and $L_1,\dots,L_r$ are sufficiently general, then $W_{\sbullet}\cap K'$ is not of finite type over $\mathbb C$.
Later, Mukai obtained similar results for $N=9$ and $r=3$ (see \cite[section 2.5]{MukaiBook}).
Each $W_n\cap K'$ is nonzero if and only if $N$ divides $n$, and each $F\in W_{Nn}\cap K'$ can be written in the form
\[
F=Z_0^mf(Z_1,Z_2,Z_3),
\]
where $m$ is an integer and $f\in H^0(\mathbb P^2,\mathcal O_{\mathbb P^2}(n-m))$.
In view of the following lemma, we know that the fraction field of $W_{\sbullet}\cap K'$ coincides with $K'$ and that $W_{\sbullet}\cap K'$ is contained in $\mathbb C[Z_0,Z_1/Z_0,Z_2/Z_0,Z_3/Z_0]$.

\begin{lemm}[\text{\cite[Lemma 2.45]{MukaiBook}}]
Let $d\geqslant 1$ be any integer.
For $f\in H^0(\mathbb P^2,\mathcal O_{\mathbb P^2}(d))$ and for each $i\in\{1,2,\dots,N\}$, we have
\[
\ord_{\{X_i=0\}}|_{K'}(Z_0^mf(Z_1,Z_2,Z_3))=m+\ord_{\bm{a}_i}(f(Z_1,Z_2,Z_3)),
\]
where $\bm{a}_i:=(a_{1,i}:\dots:a_{r,i})$ for $i\in\{1,2,\dots,N\}$.
\end{lemm}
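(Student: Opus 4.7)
My plan is to compute the valuation directly by working in the completion of the local ring at the generic point $\eta_i$ of the divisor $\{X_i=0\}$ and expanding the $Z_j$'s as formal power series in $X_i$. Since $Z_0 = X_1\cdots X_N$, one has $\ord_{\{X_i=0\}}(Z_0) = 1$, so the $m$-contribution is immediate and the task reduces to proving $\ord_{\{X_i=0\}}(f(Z_1,Z_2,Z_3)) = \ord_{\bm a_i}(f)$. As $a_{1,i}=1\neq 0$, the point $\bm a_i$ lies in the affine chart $\{Z_1\neq 0\}$, so I would dehomogenise by writing $f(Z_1,Z_2,Z_3) = Z_1^d g(Z_2/Z_1,Z_3/Z_1)$ with $g(u_2,u_3) := f(1,u_2,u_3)$, reducing the task to showing that $\ord_{\{X_i=0\}}\bigl(g(Z_2/Z_1,Z_3/Z_1)\bigr) = e$, where $e := \ord_{(a_{2,i},a_{3,i})}(g) = \ord_{\bm a_i}(f)$.

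The second step is the power series expansion. A direct rearrangement gives
\[
Z_j = Y_i Q_i\bigl(a_{j,i} + X_i\sum_{k\neq i}a_{j,k}v_k\bigr),\qquad Q_i := \prod_{\ell\neq i}X_\ell,\quad v_k := \frac{Y_k}{X_k Y_i}.
\]
Because $Y_i Q_i$ is a unit in $\mathcal O_{\eta_i}$ and $a_{1,k}=1$ for every $k$, the factor $Z_1 = Y_i Q_i\bigl(1 + X_i\sum_{k\neq i}v_k\bigr)$ is also a unit; inverting $Z_1$ in the $X_i$-adic completion yields
\[
\frac{Z_j}{Z_1} = a_{j,i} + X_i\tilde h_j + O(X_i^2),\qquad \tilde h_j := \sum_{k\neq i}(a_{j,k}-a_{j,i})v_k.
\]
Taylor-expanding $g$ around $(a_{2,i},a_{3,i})$ and collecting by powers of $X_i$, one sees that all terms of total order $<e$ vanish by definition of $e$, and the coefficient of $X_i^e$ is exactly $P_e(\tilde h_2,\tilde h_3)$, where $P_e$ denotes the nonzero degree-$e$ initial form of $g$ at $(a_{2,i},a_{3,i})$.

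The main obstacle is the final step: verifying that $P_e(\tilde h_2,\tilde h_3)\neq 0$ in the residue field $\kappa(\eta_i) \cong \mathbb C(X_\ell,Y_\ell : \ell\neq i)(Y_i)$. I would first argue that $\{v_k\}_{k\neq i}$ forms part of a transcendence basis of $\kappa(\eta_i)/\mathbb C$: indeed, together with $\{Y_\ell\}_{\ell\neq i}$ and $Y_i$ it gives $2N-1$ generators of $\kappa(\eta_i)$, which has transcendence degree $2N-1$ over $\mathbb C$, so these elements are algebraically independent. It then suffices to show that $\tilde h_2,\tilde h_3$ are $\mathbb C$-linearly independent in $\bigoplus_{k\neq i}\mathbb C v_k$, i.e.\ that the $2\times(N-1)$ matrix $B := (a_{j,k}-a_{j,i})_{j\in\{2,3\},\,k\neq i}$ has rank $2$. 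Using $a_{1,k}=1$ for all $k$, an elementary column operation (subtracting column $i$ from every other column) on the coefficient matrix $A := (a_{j,k})$ shows that $\operatorname{rank}(A) = 1 + \operatorname{rank}(B)$; the linear independence of $L_1,L_2,L_3$ gives $\operatorname{rank}(A)=3$, whence $\operatorname{rank}(B)=2$ and the proof concludes.
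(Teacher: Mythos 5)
The paper gives no proof of this lemma --- it is quoted from Mukai's book (Lemma 2.45) as a citation --- so there is nothing internal to compare against; I have checked your argument on its own merits and it is correct and complete. The expansion $Z_j=Y_iQ_i\bigl(a_{j,i}+X_i\sum_{k\neq i}a_{j,k}v_k\bigr)$ is right (the term $k=i$ of $Z_j=\sum_k a_{j,k}Y_k\prod_{\ell\neq k}X_\ell$ is the only one not divisible by $X_i$), the normalisation $a_{1,k}=1$ makes $Z_1$ a unit at $\eta_i$ and places $\bm a_i$ in the chart $\{Z_1\neq 0\}$, and the first-order coefficient $\tilde h_j=\sum_{k\neq i}(a_{j,k}-a_{j,i})v_k$ is computed correctly. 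The only genuinely delicate point is the non-vanishing of $P_e(\tilde h_2,\tilde h_3)$ in $\kappa(\eta_i)$, and you handle it properly: the $2N-1$ elements $\{v_k\}_{k\neq i}\cup\{Y_\ell\}_{\ell=1}^N\setminus\{\,\}$ you list do generate $\kappa(\eta_i)$ (one recovers $X_k=Y_k/(v_kY_i)$), so the $v_k$ are algebraically independent, and the column-operation identity $\operatorname{rank}(A)=1+\operatorname{rank}(B)$ (using $a_{1,k}\equiv 1$) together with the linear independence of $L_1,L_2,L_3$ gives $\operatorname{rank}(B)=2$; two linearly independent linear forms in algebraically independent elements are algebraically independent, so any nonzero form $P_e$ evaluated at them is nonzero. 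It is worth noting that your proof uses only the linear independence of the $L_j$ and the normalisation $a_{1,k}=1$, not the minor condition \eqref{eqn: nonzero minor} nor any further genericity, which is consistent with how the lemma is stated.
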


Next, we are going to consider a projective variant of Nagata's counterexample.
Let $T$ denote another invariant.
We define a graded linear series of $K/\mathbb C$ as
\[
\widehat W_{\sbullet}:=\bigoplus_{n\in\mathbb N}\mathbb C[X_1,Y_1,\dots,X_N,Y_N]_{\leqslant n}T^n,
\]
where $\mathbb C[X_1,Y_1,\dots,X_N,Y_N]_{\leqslant n}$ denotes the $\mathbb C$-vector space of the polynomials of degree $\leqslant n$.
Note that, for each $n$, $F(X_1,Y_1,\dots,X_N,Y_N)\in\widehat W_n\cap K'$ if and only if $F(X_1,Y_1,\dots,X_N,Y_N)\in W_{\leqslant n}\cap K'$.
Let $P:=\mathrm{Proj}(\widehat W_{\sbullet})$ and
\begin{equation}
X:=\mathrm{Proj}\left(\mathbb C[T^N,Z_0T^N,Z_1T^N,Z_2T^N,Z_3T^N]\right).
\end{equation}
Let $H:=\{T^N=0\}$ and $D_0:=\{Z_0=0\}$. Then we have
\[
\widehat W_{Nn}\cap K' \subset H^0(X,nH+nND_0)
\]
and $H^0(X,nH+nND_0)$ is the $\mathbb C$-vector space generated by $\{Z_0^{-m}f(Z_1,Z_2,Z_3)\,:\,f\in H^0(\mathbb P^2,\mathcal O_{\mathbb P^2}(m+n)),\,-n\leqslant m\leqslant nN\}$.

\begin{enonce}{Claim}
We have
\[
\mathfrak A_{P/X}\subset\bigcup_{i=1}^N\{X_i=0\}\quad\text{and}\quad\mathfrak B_{P/X}\subset D_0.
\]
\end{enonce}

\begin{proof}
Note that $\ord_{\{T^N=0\}}=(1/N)\ord_{\{T=0\}}|_{K'}$.
Since
\begin{multline*}
\mathbb C[X_1,\dots,X_N,Y_1,\dots,Y_N][1/Z_0] \\
=\mathbb C[Z_0,Z_1,Z_2,Z_3][1/Z_0][X_1,\dots,X_N,Y_1,\dots,Y_{N-3}]/(X_1\cdots X_N/Z_0-1),
\end{multline*}
$\Spec(\mathbb C[X_1,\dots,X_N,Y_1,\dots,Y_N][1/Z_0])$ is a relative global complete intersection and, hence, is flat over $\Spec(\mathbb C[Z_0,Z_1,Z_2,Z_3][1/Z_0])$. Thus we conclude.
\end{proof}

\begin{exem}
If $r=2$, $N\geqslant 2$, and $a_{2,1},\dots,a_{2,N}$ are mutually distinct, then $\widehat W_{\sbullet}\cap K'[T]$ is finitely generated by $T$, $Z_0T^N$, $Z_1T^N$, $Z_2T^N$, and 
\[
\frac{(a_{2,1}Z_1-Z_2)\cdots (a_{2,N}Z_1-Z_2)}{Z_0}T^{N(N-1)}.
\]
\end{exem}

\begin{rema}
In \cite{Mukai01,Mukai04}, Mukai considered the subfield
\[
K'':=\mathbb C(X_1,\dots,X_N,Z_1,\dots,Z_r)
\]
and studied the finite generation of $\widehat W_{\sbullet}\cap K''[T]$.
In this case, we consider the weighted projective space
\[
\mathrm{Proj}(\mathbb C[T,X_1T,\dots,X_NT,Z_1T^N,\dots,Z_rT^N]).
\]
Let $D_i$ (respectively, $H$) be the hypersurface defined by $X_iT$ for $i\in\{1,2,\dots,N\}$ (respectively, $T$).
One then has
\[
\widehat W_{\sbullet}\cap K''[T]\subset R(D_1+\dots+D_N+H)_{\sbullet}.
\]
\end{rema}

\section{Applications}

In this section, we apply the subfinite criterion (Theorem \ref{Thm:subfinite graded}) to the study of Fujita approximation for general subfinite graded linear series. Throughout the section, we let $k$ be a field and $K/k$ be a finitely generated field extension.

\begin{defi}
Let $V_\sbullet$ be a graded linear series of $K/k$ and $d$ be its Kodaira-Iitaka dimension (see Definition \ref{Def: Kodaira-Iitaka}). If $d\neq -\infty$, we define the \emph{volume} of $V_\sbullet$ as
\begin{equation}
\mathrm{vol}(V_\sbullet):=\limsup_{n\rightarrow+\infty}\frac{\dim_k(V_n)}{n^{d}/d!}.
\end{equation}
A priori this invariant takes value in $[0,+\infty]$. We will see below that, if in addition the graded linear series $V_\sbullet$ is of subfinite type, then its volume is always a positive real number. 

We say that a graded linear series $V_\sbullet$  \emph{satisfies the Fujita approximation property} if 
\[\sup_{\begin{subarray}{c}W_\sbullet\subset V_\sbullet\\
W_\sbullet\text{ of finite type}\\
\dim(W_\sbullet)=\dim(V_\sbullet)
\end{subarray}}\mathrm{vol}(W_\sbullet)=\mathrm{vol}(V_\sbullet),\]
where $W_\sbullet$ runs over the set of all graded linear series of finite type which are contained in $V_\sbullet$ and such that $W_\sbullet$ has the same Kodaira-Iitaka dimension as $V_\sbullet$.
\end{defi}  

The purpose of the section is to establish the following approximation result. 
\begin{theo}\label{Theo:Application Fujita approximation}
Any graded linear series $V_\sbullet$ of $K/k$ which is of subfinite type and has nonnegative Kodaira-Iitaka dimension $d$ satisfies the Fujita approximation property. Moreover, one has
\[\mathrm{vol}(V_\sbullet)=\lim_{n\in\mathbb N(V_\sbullet),\,n\rightarrow+\infty}\frac{\dim_k(V_n)}{n^{d}/d!}\in (0,+\infty),\]
where $\mathbb N(V_\sbullet)=\{n\in\mathbb N\,:\,V_n\neq\{0\}\}$.
\end{theo}
\begin{proof}By changing the grading we may assume without loss of generality that $V_n\neq\{0\}$ for sufficiently positive integer $n$. Let $K'$ be the homogeneous fraction field $k(V_\sbullet)$. Note that $K'/k$ is a subextension of $K/k$ and hence is finitely generated. Moreover, by Theorem \ref{Thm:subfinite graded}, we obtain that $V_\sbullet$ viewed as a graded linear series of $K'/k$ is of subfinite type. Therefore, the assertions follow from \cite[Theorem~1.1]{ChenOk} (by definition $V_\sbullet$ is birational if we consider it as a graded linear series of $K'$). 
\end{proof}

\begin{rema}In the case where the field $K$ admits a valuation \emph{of one-dimensional leaves} in a totally ordered abelian group of finite type (this is the case notably when $k$ is an algebraically closed field), we recover a result of Kaveh and Khovanskii \cite[Corollary 3.11 (2)]{Kaveh_Khovanskii12b}. Note that the existence of a valuation of one-dimensional leaves on $V_\sbullet$ implies that $V_\sbullet$ is geometrically integral since such a valuation induces by extension of scalars a valuation of one-dimensional leaves on $V_\sbullet\otimes_kk'$ for any extension of fields $k'/k$. In particular, for any pair of homogeneous elements $x$ and $y$ of $V_\sbullet\otimes_kk'$, the valuation of $xy$ is equal to the sum of the valuations of $x$ and $y$, which implies that $V_\sbullet\otimes_kk'$ is an integral domain. 
\end{rema}

By combining the results of \cite{Chen15} and the subfiniteness result (Theorem \ref{Thm:subfinite graded}), we obtain the following upper bound for the Hilbert-Samuel function of general graded linear series of subfinite type.

\begin{theo}\label{Thm: effective upper bound}
Let $V_\sbullet$ be a graded linear series of $K/k$ and $d$ its Kodaira-Iitaka dimension. There then exists a function $f:\mathbb N\rightarrow\mathbb R_+$ such that
\[f(n)=\mathrm{vol}(V_\sbullet)\frac{n^d}{d!}+O(n^{d-1}),\quad n\rightarrow+\infty\]
and 
\[\forall\,n\in\mathbb N,\quad \mathrm{dim}_k(V_n)\leqslant f(n).\]
\end{theo}

\begin{rema}\label{Rem:Okounkov body}
The result \cite[Theorem~1.1]{ChenOk} actually provides more geometric information about the graded linear series of subfinite type. Let $K/k$ be a finitely generated transcendental field extension and let $d$ be the transcendence degree of $K/k$. We fix a flag
\[k=K_0\subset K_1\subset\ldots \subset K_d=K\]
of subfields of $K$ containing $k$ such that each extension $K_i/K_{i-1}$ is transcendental and has transcendence degree $1$. Let $\mathcal A(K/k)$ be the set of all graded linear series of subfinite type $V_\sbullet$ of $K/k$ such that $k(V_\sbullet)=k$. Then there has been constructed in \cite{ChenOk} a map  $\Delta$ from $\mathcal A(K/k)$ to the set of convex bodies in $\mathbb R^d$ which satisfies the following conditions.
\begin{enumerate}[(a)]
\item If $V_\sbullet$ and $V_\sbullet'$ are two graded linear series in $\mathcal A(K/k)$ such that $V_\sbullet\subset V_\sbullet'$, then one has $\Delta(V_\sbullet)\subset\Delta(V_\sbullet')$.
\item If $V_\sbullet$ and $W_\sbullet$ are two graded linear series in $\mathcal A(K/k)$, then 
\[\Delta(V_\sbullet\cdot W_\sbullet)\supset\Delta(V_\sbullet)+\Delta(W_\sbullet):=\{x+y\,:\,x\in\Delta(V_\sbullet),\, y\in\Delta(W_\sbullet)\},\]
where $V_\sbullet\cdot W_\sbullet$ denotes the graded linear series whose $n$-th homogeneous component is the $k$-vector space generated by $\{fg\,:\,f\in V_n,\;g\in W_n\}$.
\item For any graded linear series $V_\sbullet$ in $\mathcal A(K/k)$, the volume of $V_\sbullet$ identifies with the Lebesgue measure of $\Delta(V_\sbullet)$ multiplied by $d!$.
\end{enumerate}
This allows us to construct the arithmetic analogue of Newton-Okounkov bodies for general arithmetic graded linear series of subfinite type, using the ideas of \cite{Boucksom_Chen}.
\end{rema}

In what follows, we assume that $k$ is a number field. We denote by $M_k$ the set of all places of $k$. For each $v\in M_k$, let $|\ndot|_v$ be an absolute value on $k$ which extends either the usual absolute value or certain $p$-adic absolute value (so that $|p|_v=p^{-1}$) on $\mathbb Q$. 

As \emph{adelic vector bundle} on $\Spec k$, we refer to the data $\overline V=(V,(\|\ndot\|_v)_{v\in M_k})$ of a finite dimensional vector space $V$ over $k$ and a family of norms $\|\ndot\|_v$ over $V\otimes_kk_v$ such that there exists a basis $(e_i)_{i=1}^r$ of $V$ over $k$ and a finite subset $S$ of $M_k$ satisfying the following condition:
\[\forall\,v\in M_k\setminus S,\quad\forall\,(\lambda_1,\ldots,\lambda_r)\in k_v^r,\quad \|\lambda_1e_1+\cdots+\lambda_re_r\|_v=\max_{i\in\{1,\ldots,r\}}|\lambda_i|_v.\] 
Given an adelic vector bundle $\overline V$ on $\Spec k$, for any nonzero element $s\in V$, we define the \emph{Arakelov degree of $s$} as
\[\widehat{\deg}(s):=-\sum_{v\in M_k}[k_v:\mathbb Q_v]\ln\|s\|_v.\]
By the product formula
\[\forall\,a\in k^{\times},\quad\sum_{v\in M_k}[k_v:\mathbb Q_v]\ln|a|_v=0\]
we obtain that
\[\forall\,a\in k^{\times},\quad \widehat{\deg}(as)=\widehat{\deg}(s).\]
Moreover, the \emph{Arakelov degree of $\overline V$} is defined as
\[-\sum_{v\in M_k}\ln\|\eta\|_{v,\det},\]
where $\eta$ is a nonzero element of $\det(V)$, and 
\[\|\eta\|_{v,\det}=\inf\{\|x_1\|_v\cdots\|x_r\|_v\,:\,\eta=x_1\wedge\cdots\wedge x_r\}.\] Again by the product formula we obtain that the definition does not depend on the choice of $\eta\in\det(V)\setminus\{0\}$.

Let $\overline V$ be an adelic vector bundle of rank $r$ on $\Spec k$. For any $t\in\mathbb R$, let 
\[\mathcal F^t(V)=\mathrm{Vect}_k(\{s\in V\setminus\{0\}\,:\,\widehat{\deg}(s)\geqslant t\}).\]
This is a decreasing $\mathbb R$-filtration on $V$, called the \emph{$\mathbb R$-filtration by minima}.
Note that for any $i\in\{1,\ldots,r\}$, the number
\[\lambda_i(\overline V)=\sup\{t\in\mathbb R\,:\,\mathrm{rk}_k(\mathcal F^t(V))\geqslant i\}\]
coincides with the minus logarithmic version of the $i$-th minima in the sense of Roy and Thunder \cite{Roy_Thunder96,Roy_Thunder99}. For any $s\in V$, we let
\[\lambda(s):=\sup\{t\in\mathbb R\,:\,s\in\mathcal F^t(V)\}.\]

In the following, we let $K/k$ be a finitely generated field extension of the number field $k$. Let $V_\sbullet$ be a graded linear series of subfinite type of $K/k$. For each $n\in\mathbb N$, we equip $V_n$ with a structure of adelic vector bundle $(V_n,(\|\ndot\|_{n,v})_{v\in M_k})$ on $\Spec k$ such that, for any $v\in M_k$,
\begin{equation}\label{Equ:submultiplicative}\forall\,(n,m)\in\mathbb N^2,\;\forall\,(s_n,s_m)\in V_{n}\times V_{m},\quad \|s_n\cdot s_m\|_v\leqslant\|s_n\|_v\cdot\|s_m\|_v.\end{equation}
We assume in addition that 
\[\lambda_{\max}(\overline V_\sbullet):=\limsup_{n\rightarrow+\infty}\frac{\lambda_1(\overline V_n)}{n}<+\infty.\]
This condition implies that $V_\sbullet$ has a nonnegative Kodaira-Iitaka dimension.
For any $t\in\mathbb R$, let \[V^{t}_\sbullet:=\bigoplus_{n\in\mathbb N}\mathcal F^{nt}(V_n).\]
It is a graded linear series of $K/k$. By definition one has $V_n^t=\{0\}$ if $n\in\mathbb N_{\geqslant 1}$ and $t>\lambda_{\max}(\overline V_\sbullet)$.

\begin{prop}\label{Pro: same homogeneous fraction}
For any $t<\lambda_{\max}(\overline V_\sbullet)$, one has $k(V_\sbullet)=k(V_\sbullet^t)$.  
\end{prop}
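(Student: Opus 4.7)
The plan is to prove the nontrivial inclusion $k(V_\sbullet) \subset k(V_\sbullet^t)$, the opposite inclusion being obvious from $V_\sbullet^t \subset V_\sbullet$. By Lemma \ref{Rem:generation of homogeneous fractions}, it is enough to fix one sufficiently positive $n$ with $V_n \neq \{0\}$ and $k(V_n) = k(V_\sbullet)$, and then to show that every ratio $f/g$ with $\{f,g\} \subset V_n$, $g \neq 0$, already lies in $k(V_\sbullet^t)$. Such an $n$ certainly exists, because $t < \lambda_{\max}(\overline V_\sbullet)$ forces $V_m \neq \{0\}$ for infinitely many $m$.

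The key construction will be a twist by high powers of a distinguished homogeneous element of large Arakelov degree. Since $\lambda_{\max}(\overline V_\sbullet) = \limsup_{m\to\infty} \lambda_1(\overline V_m)/m$ strictly exceeds $t$, I can first fix some $\lambda \in (t, \lambda_{\max})$ and some $m$ with $\lambda_1(\overline V_m) > m\lambda$, and then choose $s \in V_m \setminus \{0\}$ with $\widehat{\deg}(s) > m\lambda$. Next I would translate the submultiplicativity condition \eqref{Equ:submultiplicative} into the superadditivity $\widehat{\deg}(xy) \geqslant \widehat{\deg}(x) + \widehat{\deg}(y)$ of the Arakelov degree on nonzero homogeneous elements (by taking $-\ln$ in each place and summing against $[k_v:\mathbb Q_v]$). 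Iterating this yields
\[
\widehat{\deg}(fs^k) \geqslant \widehat{\deg}(f) + km\lambda \quad \text{and} \quad \widehat{\deg}(gs^k) \geqslant \widehat{\deg}(g) + km\lambda
\]
for every $k \in \mathbb N$. Since $\lambda > t$, both right-hand sides will eventually exceed $(n+km)t$ once $k$ is large enough, so $fs^k$ and $gs^k$ both belong to $\mathcal F^{(n+km)t}(V_{n+km}) = V_{n+km}^t$. As $g$ and $s$ are nonzero in the field $K$, the product $gs^k$ is nonzero as well, and the ratio $fs^k/(gs^k) = f/g$ therefore lies in $k(V_\sbullet^t)$.

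The only genuine obstacle I anticipate is the absorption of the two a priori unconstrained constants $\widehat{\deg}(f)$ and $\widehat{\deg}(g)$ into the twisting procedure; this is precisely what the positive linear slope $k \mapsto km(\lambda - t)$ provides, and the existence of that slope is what the strict inequality $t < \lambda_{\max}$ buys us. The same inequality is also what produces the twisting element $s$ of sufficiently large normalised Arakelov degree, so the argument really collapses onto a single quantitative use of the hypothesis.
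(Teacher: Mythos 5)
Your proof is correct and follows essentially the same route as the paper: both arguments twist $f$ and $g$ by a high power of a homogeneous element $s$ whose normalised degree exceeds $t$, so that the products land in $\mathcal F^{(n+km)t}(V_{n+km})$ while their ratio is unchanged. The only cosmetic differences are that you work directly with $\widehat{\deg}$ and its superadditivity rather than the filtration function $\lambda$, and that you reduce to a single degree $n$ via Lemma \ref{Rem:generation of homogeneous fractions}, which the paper's proof does not need since the ratios from all degrees already generate $k(V_\sbullet)$ by definition.
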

\begin{proof} Clearly one has $k(V_\sbullet)\supset k(V_\sbullet^t)$. It suffices to prove the converse inclusion.
Let $n\geqslant 1$ be an integer and $f$, $g$ be nonzero elements in $V_n$. Since $t<\lambda_{\max}(V_\sbullet)$ there exist $m\in\mathbb N_{\geqslant 1}$ and $s\in V_m$ such that $\lambda(s)> mt$. Thus for sufficiently positive integer $\ell$ one has $\lambda(s^\ell f)>(\ell m+n)t$ and $\lambda(s^\ell g)>(\ell m+n)t$. Therefore $\{s^\ell f, s^\ell g\}\subset V_{\ell m+n}^t$, which implies $f/g\in k(V_\sbullet^t)$.
\end{proof}

The above proposition allows us to consider $V_\sbullet^t$ as a birational graded linear series of $k(V_\sbullet)/k$ and to construct its Newton-Okounkov body as reminded in Remark \ref{Rem:Okounkov body}. We define the \emph{concave transform} of $\overline{V}_\sbullet$ as the function $G_{\overline{V}_\sbullet}$ on $\Delta(V_\sbullet)$ sending $x\in\Delta(V_\sbullet)$ to
\[\sup\{t<\lambda_{\max}(\overline{V}_\sbullet)\,:\,x\in\Delta(V_\sbullet^t)\}.\] By the condition (b) in Remark \ref{Rem:Okounkov body}, the function $G_{\overline{V}_\sbullet}$ is concave.

The following result generalises \cite[Theorem~2.8]{Boucksom_Chen} to the case of subfinite adelically normed graded linear series.

\begin{theo}\label{Thm:convergence thm}
Let $K/k$ be a finitely generated extension of a number field $k$, and $\overline{V}_\sbullet=\bigoplus_{n\in\mathbb N}\overline V_n$ a graded linear series of subfinite type of $K/k$ of Kodaira-Iitaka dimension $d\geqslant 0$, equipped with structures of adelic vector bundles on $\Spec k$, which satisfy the submultiplicativity condition \eqref{Equ:submultiplicative} and the condition $\lambda_{\max}(\overline{V}_\sbullet)<+\infty$. Then the sequence of measures
\[\frac{1}{\mathrm{rk}_k(V_n)}\sum_{i=1}^{\mathrm{rk}_k(V_n)}\delta_{\lambda_i(V_n)/n},\quad n\in\mathbb N(V_\sbullet)=\{m\in\mathbb N\,:\,V_m\neq\{0\}\}\]
converges weakly to a Boreal probability measure on $\mathbb R$, which is the image of the uniform measure \[\frac{1}{\mathrm{vol}(\Delta(V_\sbullet))}\indic_{\Delta(V_\sbullet)}(x)\,\mathrm{d}x\] by the concave transform $G_{\overline{V}_\sbullet}$.
\end{theo}
\begin{proof}
For any $t<\lambda_{\max}(\overline V_\sbullet)$, the graded linear series $V^t_\sbullet$ has the same homogeneous fraction field as $V_\sbullet$ (see Proposition \ref{Pro: same homogeneous fraction}). Hence we can construct a decreasing family $(\Delta(V_\sbullet^t))_{t<\lambda_{\max}(\overline V_\sbullet)}$ of convex bodies contained in $\Delta(V_\sbullet)$, as described in Remark \ref{Rem:Okounkov body}. Moreover, if $t_1$ and $t_2$ are two real numbers which are $<\lambda_{\max}(\overline{V}_\sbullet)$. Then by the same method as in \cite[\S1.3]{Boucksom_Chen}, we obtain the desired result.
\end{proof}


\backmatter
\bibliography{subfinite}
\bibliographystyle{smfplain}

\end{document}